\newcommand{\R}{\mathbb{R}}
\def\vec#1{\mbox{\boldmath $#1$}}
\newtheorem{theorem}{Theorem}
\newtheorem{lemma}{Lemma}
\newtheorem{remark}{Remark}
\newtheorem{definition}{Definition}
\newtheorem{assumption}{Assumption}
\begin{document}


\title{Consistent estimation with the use of orthogonal projections \\
for a linear regression model with errors in the variables}

\author{Kensuke Aishima
\thanks{Faculty of Computer and Information Sciences,  
  Hosei University, Tokyo 184-8585, Japan
  (\texttt{aishima@hosei.ac.jp}).}
}

\maketitle

\begin{abstract}
In this paper, we construct an estimator 
of an errors-in-variables linear regression model.
The regression model leads to a constrained total least squares problems
with row and column constraints.
Although this problem can be numerically solved,
it is unknown whether the solution has consistency in the statistical sense.
The proposed estimator can be constructed 
by the use of orthogonal projections and their properties,
its strong consistency is naturally proved.
Moreover, our asymptotic analysis proves the strong consistency
of the total least squares solution of
the problem with row and column constraints.

\end{abstract}

\noindent
{\em Keywords}: Total least squares, singular value decomposition, eigenvalue problems, Rayleigh-Ritz procedure, strong convergence

\noindent
{\em PACS}: 65F20, 65F15, 15A09, 15A18, 62H12, 60B12


\section{Introduction}
Linear parametric statistical models arise in
many scientific and engineering problems,
in particular a data fitting problem.
With such parametric models in mind, 
we focus on an overdetermined system 
$AX \approx B$ for given $A\in \R^{m\times n}$ and $B\in \R^{m\times \ell}$, which are contaminated with noise,
where $X\in \R^{n\times \ell}$ is an unknown matrix to be estimated.
The total least squares method (TLS)~\cite{GV1980,GV2013} is a powerful technique
to find $X$ in a reasonable mathematical formulation. 
The TLS is computed by the the singular value decomposition (SVD)
due to the Eckart-Young-Mirsky theorem.
The TLS is
interpreted as an estimator in the statistical sense,
and the estimator has strong consistency
under some assumptions on the random noise~\cite{Gleser1981}.

The TLS has many variants
adopted to the problem formulations.
As a typical example, there is a rank constrained 
TLS problem, which can be solved by
the SVD, and some probabilistic analysis is presented in~\cite{Park2011}.
In this paper, we are concerned with other
constrained TLS problems. Suppose that,
for the matrix $A$, the first $k$ columns are exactly known
without noise. In addition, the first $j$ rows of $A$ and $B$
are exactly known without noise.
Such a problem can be formulated as a
constrained TLS problem, which can be solved
by the SVD and the QR decomposition~\cite{Demmel1987,Huffel1991,wei1998}.
The discussion of the column constraints
is detailed in \cite{GHS1987,Huffel1989}.
Recently, the problem with the row constraints 
has attracted much attention in many application areas,
and thus numerical method is revived with
mathematical analysis for the solvability conditions~\cite{LJY2022}.
With the above pioneering study, numerical and applied research 
on the TLS problem and its various variants is currently underway.

In this paper, we aim to perform asymptotic statistical analysis 
for large sample size, namely consistency analysis.
The strong consistency of the original total least squares solution
is proved by Gleser~\cite{Gleser1981}.
In addition, there exist some theoretical results for the consistency analysis.
Among them, the consistency proved by Gleser~\cite{Gleser1981} is easily extended to
the column constraints TLS problem as in~\cite[\S 3]{Huffel1991}.
However, the consistency of the solution of more general cases
involving the row constraints was stated as unproved,
and it appears to have remained an open problem
for a long time in subsequent researches.
Recently, for the row constraints TLS problem,
the strong consistency of the numerical solution has been proved
under reasonable assumptions~\cite{Aishima2022}.
However, the case of both column and row constraints
is not discussed in any existing study,
as far as the author knows.

With this research background, we construct an estimator with strong consistency for problems that are constrained in both columns and rows. Technically, the contribution is that the algorithm design focuses on the properties of orthogonal projections, rather than on the usual optimization methods, which naturally proves strong consistency. This strategy leads to a straightforward proof that the TLS estimator for the constrained problem has strong consistency.
This research is not a proposal for a probabilistic method for efficiently solving problems in numerical linear algebra. It is the result of proving strong consistency in a statistical sense using numerical techniques in numerical linear algebra, and the contribution of this paper is the indication of such a research direction.

This paper is organized as follows.
The next section is devoted to a description of the previous study
to clarify the motivation for this study.
Section~\ref{sec:aim} describes our target regression model
to construct an estimator with consistency analysis. 
In Section~\ref{sec:mainresult}, we propose an estimator
with consistency analysis focusing on the orthogonal projections. 
In Section~\ref{sec:mainresult2},
we prove strong consistency of the TLS estimator,
with emphasis on the orthogonal projections.
Finally, Section~\ref{sec:conclusion} gives the conclusion.

\section{Previous study}\label{sec:prev}
This section is devoted to a description of the previous study to formulate our target problems.
Throughout the paper, $I$ is an identity matrix of an appropriate size,
and $0_{p\times q}$ is a $p\times q$ zero matrix.

\subsection{Total least squares solution and its strong consistency for a regression model}
Given $A\in \R^{m\times n}$ and ${B} \in \R^{m\times \ell}$,
the total least squares (TLS) is to find ${X}_{\rm tls} \in \R^{n\times \ell}$
for the following optimization problem:
\begin{align}\label{eq:tls}
\min_{{X}_{\rm tls}, \Delta A, \Delta {B}} {\|\Delta A\|_{\rm F}}^{2}+{\|\Delta {B}\|_{\rm F}}^{2}
\quad {\rm such \ that}\  (A+\Delta A){X}_{\rm tls}={B}+\Delta {B}.
\end{align}

The solution vector ${X}_{\rm tls}$ can be obtained
by the singular value decomposition (SVD)
under reasonable assumptions~\cite{GV1980}.
It is due to the Eckart-Young-Mirsky theorem.
Here we describe how to compute ${X}_{\rm tls}$ 
with the use of the eigenvalue decomposition
to consider a statistical model.

First, we define $C$, $\Delta C$, and $Y$ such that
\begin{align}\nonumber
C=
\begin{bmatrix}
A & B 
\end{bmatrix}\in \R^{m\times (n+\ell)},
\quad
\Delta{C}=
\begin{bmatrix}
\Delta{A} & \Delta{B} 
\end{bmatrix}\in \R^{m\times (n+\ell)},
\quad
Y=
\begin{bmatrix}
X_{\rm tls} \\ -I
\end{bmatrix}\in \R^{(n+\ell)\times \ell}.
\end{align}
Then we have
\begin{align}\nonumber
(C+\Delta{C})Y=0_{m\times \ell}
\end{align}
from the constraint in \eqref{eq:tls}.
For solving the optimization problem \eqref{eq:tls},
let 
\begin{align}\nonumber
F=C^{\top}C\in \R^{(n+\ell)\times (n+\ell)}.
\end{align}
In addition, for $i=1,\ldots ,n+\ell$,
let $\lambda_{i}$ and $\vec{z}_{i}$ denote the eigenpairs, i.e., 
\begin{align}\nonumber
F\vec{z}_{i}=\lambda_{i}\vec{z}_{i}\ (i=1,\ldots ,n+\ell),\quad
(0\le )\lambda_{1}\le \cdots \le \lambda_{n+\ell}.
\end{align}
Moreover, for a part of the eigenpairs, 
let
\begin{align}\nonumber
\Lambda_{\ell}={\rm diag}(\lambda_{1}, \ldots , \lambda_{\ell}),\quad
Z_{\ell}=[\vec{z}_{1},\ldots ,\vec{z}_{\ell}].
\end{align}
The eigenvector matrix $Z_{\ell}$ is used to solve \eqref{eq:tls}.
Let $Z_{\ell}$ be divided into
\begin{align}\nonumber
Z_{\ell}=:
\begin{bmatrix}
Z_{\ell,{\rm upper}}
\\
Z_{\ell,{\rm lower}}
\end{bmatrix},\ 
Z_{\ell,{\rm upper}}\in \R^{n\times \ell},\
Z_{\ell,{\rm lower}}\in \R^{\ell \times \ell}.
\end{align}
Then we have
\begin{align}\nonumber
X_{\rm tls}=-Z_{\ell,{\rm upper}}{Z_{\ell,{\rm lower}}}^{-1},
\end{align}
where the theoretical background is described in~\cite{Gleser1981,GV1980} and~\cite[\S 6.3]{GV2013}.

Here we consider a statistical problem setting
relevant to \eqref{eq:tls}
to analyze the property of $X_{\rm tls}$ as the estimator in the statistical sense.
To this end, let $\bar{A}\in \R^{m\times n}$ and $\bar{{B}}\in \R^{m\times \ell}$
denote parameters depending on $m$.
More specifically,
\begin{align*}
\bar{A}:=[\bar{\vec{a}}_{1},\ldots ,\bar{\vec{a}}_{m}]^{\top},\quad
\bar{{B}}:=[\bar{\vec{b}}_{1},\ldots ,\bar{\vec{b}}_{m}]^{\top},
\end{align*}
where $\bar{\vec{a}}_{i}\in \R^{n}$ and $\bar{\vec{b}}_{i}\in \R^{\ell}$
for $i=1,\ldots ,m$.
It then follows that
${\bar{\vec{a}}_{i}}{}^{\top}{X}=\bar{\vec{b}}_{i}{}^{\top}\ (i=1,\ldots m)$
for some ${X}\in \R^{n\times \ell}$ in the linear regression model. 
The errors-in-variables linear regression model
is formulated as
\begin{align}\label{eq:tlsregression}
\bar{A}{X}=\bar{{B}},\quad 
A=\bar{A}+E_{A},\quad 
{B}=\bar{{B}}+{E}_{{B}},
\end{align}
where all the elements of $E_{A}$ and ${E}_{{B}}$
are random variables, and only $A$ and ${B}$ can be observed.
Hence, letting
\begin{align}\label{eq:E}
E:=
\begin{bmatrix}
 E_{A} &  {E}_{{B}} 
\end{bmatrix},
\end{align}
we assume the following.
\begin{assumption}\label{as:E}
The rows of $E$ are i.i.d. with common mean row vector $\vec{0}$ and common covariance matrix $\sigma^2 I$. 
\end{assumption}

\begin{remark}
If the common covariance matrix is a general form $\Sigma$,
the Cholesky factorization $\Sigma=LL^{\top}$
can reduce this case to the simple problem under Assumption~\ref{as:E}
as in \cite[\S 5]{Gleser1981}.
In addition, as in~\cite{Huffel1989},
the general covariance matrix can be formulated
by the generalized eigenvalue problem,
though the formulation is
mathematically equivalent to the remedy in \cite[\S 5]{Gleser1981}.
Since our purpose is mathematical analysis, 
Assumption~\ref{as:E} covers the general $\Sigma$ with the above remedy. 
Therefore, in the following, we perform consistency analysis under Assumption~\ref{as:E},
where the standard deviation $\sigma$ is unknown.
\end{remark}

In addition, assume that
$\lim_{m\to \infty}m^{-1}\bar{A}^{\top}\bar{A}$ exists, and it is positive definite,
related to the uniqueness of ${X}$ in the asymptotic regime as $m \to \infty$.
Below we rewrite the assumption
for extending the regression model in the next section. 
From such a perspective, let
\begin{align}\label{eq:barC}
\bar{C}:=
\begin{bmatrix}
\bar{A} & \bar{{B}} 
\end{bmatrix}=
\begin{bmatrix}
\bar{A} & \bar{A}{X} 
\end{bmatrix}.
\end{align}
From easy calculations, we have
\begin{align*}
m^{-1}\bar{C}^{\top}\bar{C}=
\begin{bmatrix}
\ m^{-1}\bar{A}^{\top}\bar{A} & m^{-1}\bar{A}^{\top}\bar{A}{X} \\
\ m^{-1}{X}^{\top}\bar{A}^{\top}\bar{A} & m^{-1}{X}^{\top}\bar{A}^{\top}\bar{A}{X}
\end{bmatrix}.
\end{align*}
The assumption that $\lim_{m\to \infty}m^{-1}\bar{A}^{\top}\bar{A}$ is
positive definite is mathematically the same as the next assumption.
\begin{assumption}\label{as:CTC}
$\lim_{m\to \infty}m^{-1}\bar{C}^{\top}\bar{C}=:\bar{S}$ exists, and ${\rm rank}(\bar{S})=n$. 
\end{assumption}

Then we have the next theorem
that states the strong consistency of the estimation ${X}_{\rm tls}$ in \eqref{eq:tls}.
\begin{theorem}[Strong consistency~{\cite[Lemma~3.3]{Gleser1981}}]\label{thm:Gleser}
Under Assumptions~\ref{as:E} and~\ref{as:CTC},
we have
\begin{align*}
\lim_{m\to \infty}{X}_{\rm tls}={X} \ \text{with probability one},
\end{align*}
where ${X}_{\rm tls}$ is the solution vector of \eqref{eq:tls}.
\end{theorem}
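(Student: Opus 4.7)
The plan is to reduce the claim to a spectral convergence statement for the scaled Gram matrix $m^{-1}C^{\top}C$ and then exploit continuity of the associated invariant subspace to pass to the quotient $-Z_{\ell,{\rm upper}}Z_{\ell,{\rm lower}}^{-1}$. Throughout I would work with $m^{-1}C^{\top}C$ instead of $F=C^{\top}C$ directly, since this is the object with a deterministic almost-sure limit.

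First, I would establish that $m^{-1}C^{\top}C\to \bar S+\sigma^{2}I$ almost surely. Substituting $C=\bar C+E$ gives $m^{-1}C^{\top}C=m^{-1}\bar C^{\top}\bar C+m^{-1}\bar C^{\top}E+m^{-1}E^{\top}\bar C+m^{-1}E^{\top}E$. The first term tends to $\bar S$ by Assumption~\ref{as:CTC}. The last term tends to $\sigma^{2}I$ a.s.\ by Kolmogorov's strong law applied entrywise to the i.i.d.\ rows of $E$ from Assumption~\ref{as:E}. The cross terms tend to zero a.s.\ by the same strong law, using that $m^{-1}\bar C^{\top}\bar C$ is bounded and the rows of $E$ have mean zero with finite second moments.

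Second, I would identify the limiting spectrum. Because $\bar C\,Y_{*}=\bar A X-\bar B=0_{m\times\ell}$ where $Y_{*}:=[X^{\top},\,-I]^{\top}$, one has $\bar S\,Y_{*}=\lim m^{-1}\bar C^{\top}(\bar C\,Y_{*})=0$, so ${\rm range}(Y_{*})\subseteq\ker(\bar S)$. Assumption~\ref{as:CTC} forces $\dim\ker(\bar S)=\ell=\dim{\rm range}(Y_{*})$, so the two subspaces coincide. Hence $\bar S+\sigma^{2}I$ has eigenvalue $\sigma^{2}$ of multiplicity exactly $\ell$ with associated invariant subspace ${\rm range}(Y_{*})$, and all remaining eigenvalues are strictly larger than $\sigma^{2}$. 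This yields a positive gap separating the $\ell$ smallest eigenvalues from the rest.

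Third, I would invoke spectral perturbation. Since $m^{-1}C^{\top}C$ converges a.s.\ to a symmetric matrix with an isolated cluster of the $\ell$ smallest eigenvalues, the orthogonal projector $P_{\ell}(m)$ onto ${\rm span}(Z_{\ell})$ converges a.s.\ in operator norm to the orthogonal projector onto ${\rm range}(Y_{*})$, for instance via Davis--Kahan or continuity of the Riesz projection. The quotient $-Z_{\ell,{\rm upper}}Z_{\ell,{\rm lower}}^{-1}$ depends only on ${\rm span}(Z_{\ell})$, not on the particular basis, since it is invariant under right multiplication of $Z_{\ell}$ by any invertible $\ell\times\ell$ matrix. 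Taking the basis $Y_{*}$ of the limit (whose lower block $-I$ is trivially invertible) and using convergence of projectors, for almost every realization the lower block of $Z_{\ell}$ is eventually invertible and the quotient converges to $-X(-I)^{-1}=X$.

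\textbf{Main obstacle.} The chief difficulty is that the limiting eigenvalue $\sigma^{2}$ has multiplicity $\ell$, so individual eigenvectors in $Z_{\ell}$ need not converge; only the $\ell$-dimensional invariant subspace does. The key observation that bypasses this is that the TLS quotient is a function of the subspace alone, so working with spectral projectors rather than eigenvector matrices makes the argument go through cleanly.
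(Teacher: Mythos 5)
Your proof is correct and takes essentially the same route as the paper, which does not reprove this theorem but cites Gleser's Lemma~3.3; the strategy you describe --- almost-sure convergence of $m^{-1}C^{\top}C$ to $\bar{S}+\sigma^{2}I$ via the strong law of large numbers and a weighted Kolmogorov-type argument for the cross terms (the paper's Lemma~\ref{lem:kolmogolov}), followed by convergence of the bottom-$\ell$ invariant subspace and invariance of the quotient $-Z_{\ell,{\rm upper}}Z_{\ell,{\rm lower}}^{-1}$ under change of basis --- is exactly the strategy the paper itself uses for its generalizations in Lemma~\ref{lem:F} and Theorem~\ref{thm:main}. Your explicit handling of the multiplicity-$\ell$ limiting eigenvalue through spectral projectors rather than individual eigenvectors is a careful refinement of a point the paper's analogous proofs compress into ``the continuities of eigenvectors.''
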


In the following, we consider extensions of the TLS problem and the corresponding consistency analysis.

\subsection{Constrained total least squares problems and their consistency analysis}
Here, let $A$ be divided into
\begin{align}\nonumber
A=\begin{bmatrix}
{A}_{1} & {A}_{2}
\end{bmatrix},\quad
{A}_{1}\in \R^{m\times k},\quad {A}_{2}\in \R^{m\times (n-k)}.
\end{align}
We consider the following optimization problem:
\begin{align}\label{eq:ctls}
\min_{{X}_{\rm ctls}, \Delta A, \Delta {B}} {\|\Delta A\|_{\rm F}}^{2}+{\|\Delta {B}\|_{\rm F}}^{2}
\quad {\rm such \ that}\  \begin{bmatrix}
{A}_{1} & {A}_{2}+\Delta A 
\end{bmatrix}{X}_{\rm ctls}=
{{B}}+\Delta {B}.
\end{align}

The above problem is the so-called constrained
total least squares problem,
where $A_{1}$ comprising the first $k$ columns vectors
is fixed, different from~\eqref{eq:tls}. 
The constrained TLS problem can be reduced to
the TLS problem with the use of the QR decomposition of $A_{1}$:
\begin{align}\nonumber
A_{1}=QR,\quad
Q=
\begin{bmatrix}
{Q}_{1} & {Q}_{2}
\end{bmatrix},\quad
R=
\begin{bmatrix}
{R}_{1} \\
0_{(m-k)\times k}
\end{bmatrix},\quad
Q_{1}\in \R^{m\times k},\quad
Q_{2}\in \R^{m\times (m-k)},\quad
R_{1} \in \R^{k\times k}
\end{align}
as the preconditioning,
where $R_{1}$ is an upper triangular matrix.
The above definitions mean
\begin{align}\nonumber
A_{1}=Q_{1}R_{1}.
\end{align}
Applying the orthogonal transformation by the above $Q$, we have
\begin{align}\nonumber
Q^{\top}
\begin{bmatrix}
{A}_{1} & {A}_{2} & B
\end{bmatrix}
=
\begin{bmatrix}
{R}_{1} & {Q_{1}}^{\top}{A}_{2} & {Q_{1}}^{\top}{B} \\
0_{(m-k)\times k}       & {Q_{2}}^{\top}{A}_{2} & {Q_{2}}^{\top}{B} 
\end{bmatrix}.
\end{align}
Briefly speaking, the problem is reduced to 
the small size TLS problem associated with
the lower right submatrix
$[{Q_{2}}^{\top}{A}_{2} \ \ {Q_{2}}^{\top}{B}]$;
see~\cite{Demmel1987,GHS1987,Huffel1989} for the details.

The corresponding regression model is
\begin{align}\label{eq:crmodel}
\begin{bmatrix}
\bar{A}_1 & \bar{A}_2 
\end{bmatrix}
{X}=\bar{{B}},\quad 
\begin{bmatrix}
{A}_1 & {A}_2 
\end{bmatrix}
=\begin{bmatrix}
\bar{A}_1 & \bar{A}_2+E_{A_2} 
\end{bmatrix}
,\quad 
{B}=\bar{{B}}+{E}_{{B}},
\end{align}
where all the elements of $E_{A}$ and ${E}_{{B}}$
are random variables, and only $A$ and ${B}$ can be observed.
Hence, letting
\begin{align}\label{eq:E2}
E:=
\begin{bmatrix}
E_{A_2} & {E}_{{B}} 
\end{bmatrix},
\end{align}
we have the next theorem
that states the strong consistency of the estimation ${X}_{\rm ctls}$ in \eqref{eq:ctls}.
\begin{theorem}[Strong consistency~{\cite[\S 3]{Huffel1991}}]\label{thm:Huffel}
Under Assumptions~\ref{as:E} and~\ref{as:CTC},
we have
\begin{align*}
\lim_{m\to \infty}{X}_{\rm ctls}={X} \ \text{with probability one},
\end{align*}
where ${X}_{\rm ctls}$ is the solution vector of \eqref{eq:ctls}.
\end{theorem}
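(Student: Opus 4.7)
The plan is to reduce \eqref{eq:ctls} to a standard (unconstrained) TLS problem via the QR decomposition of $A_1$, and then invoke Theorem~\ref{thm:Gleser} on the reduced problem.

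First, I apply the QR decomposition $A_1 = Q_1 R_1$ and form the orthogonal $Q = [Q_1, Q_2]$ as in the preconditioning already described. Since $A_1 = \bar{A}_1$ is noise-free, $Q$ and $R_1$ are deterministic. Writing $X_{\rm ctls} = [X_{1,\rm ctls}^{\top}, X_{2,\rm ctls}^{\top}]^{\top}$ and applying $Q^{\top}$ to the constraint in \eqref{eq:ctls}, the upper block can be satisfied at zero cost in the objective by choosing $Q_1^{\top} \Delta A = 0$, $Q_1^{\top} \Delta B = 0$, and $X_{1,\rm ctls} = R_1^{-1}(Q_1^{\top} B - Q_1^{\top} A_2 X_{2,\rm ctls})$; the lower block then becomes a standard TLS problem $\tilde{A}_2 X_{2,\rm ctls} \approx \tilde{B}$ with $\tilde{A}_2 := Q_2^{\top} A_2$ and $\tilde{B} := Q_2^{\top} B$. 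Using $Q_2^{\top} \bar{A}_1 = 0$, the corresponding noise-free regression is $Q_2^{\top} \bar{A}_2 \cdot X_2 = Q_2^{\top} \bar{B}$, with reduced noise $Q_2^{\top}[E_{A_2}, E_B]$.

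Second, I verify the ingredients needed to drive the Gleser-style consistency argument of Theorem~\ref{thm:Gleser} on the reduced problem. Writing $E := [E_{A_2}, E_B]$ and $\bar{C}_{\rm red} := [\bar{A}_2, \bar{B}]$, the key identities are $Q_2 Q_2^{\top} = I - Q_1 Q_1^{\top}$ together with
\[
(Q_2^{\top} E)^{\top}(Q_2^{\top} E) = E^{\top} E - E^{\top} Q_1 Q_1^{\top} E, \qquad (Q_2^{\top} \bar{C}_{\rm red})^{\top}(Q_2^{\top} E) = \bar{C}_{\rm red}^{\top} E - \bar{C}_{\rm red}^{\top} Q_1 Q_1^{\top} E.
\]
The SLLN applied under Assumption~\ref{as:E} gives $m^{-1} E^{\top} E \to \sigma^{2} I$ and $m^{-1} \bar{C}_{\rm red}^{\top} E \to 0$ almost surely; the projection corrections involving $Q_1 Q_1^{\top}$ have entries with expectation $O(1/m)$ because $\mathrm{rank}(Q_1 Q_1^{\top}) = k$ is fixed, and vanish almost surely by a variance bound plus Borel--Cantelli. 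A Schur-complement computation on $\bar{S}$ in Assumption~\ref{as:CTC} then shows that $\lim m^{-1}[\tilde{A}_2,\tilde{B}]^{\top}[\tilde{A}_2,\tilde{B}]$ exists and has rank $n-k$, so Theorem~\ref{thm:Gleser} applies to the reduced problem and yields $X_{2,\rm ctls} \to X_2$ almost surely.

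Third, substituting into $X_{1,\rm ctls} = R_1^{-1}(Q_1^{\top} B - Q_1^{\top} A_2 X_{2,\rm ctls})$ and using $X_1 = R_1^{-1}(Q_1^{\top} \bar{B} - Q_1^{\top} \bar{A}_2 X_2)$ together with the identity $R_1^{-1} Q_1^{\top} = (\bar{A}_1^{\top} \bar{A}_1)^{-1}\bar{A}_1^{\top}$ gives
\[
X_{1,\rm ctls} - X_1 = (\bar{A}_1^{\top} \bar{A}_1)^{-1}\bar{A}_1^{\top}\bigl(E_B - E_{A_2} X_{2,\rm ctls} - \bar{A}_2(X_{2,\rm ctls} - X_2)\bigr).
\]
Each of $m^{-1}\bar{A}_1^{\top}E_B$ and $m^{-1}\bar{A}_1^{\top}E_{A_2}$ tends to $0$ almost surely by the SLLN for weighted sums, $m^{-1}\bar{A}_1^{\top}\bar{A}_2$ converges by Assumption~\ref{as:CTC}, and $(m^{-1}\bar{A}_1^{\top}\bar{A}_1)^{-1}$ is bounded, from which $X_{1,\rm ctls} \to X_1$ almost surely follows. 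The main obstacle is the verification in the second paragraph: because $Q_1$ depends on $m$, the rows of $Q_2^{\top}E$ are only uncorrelated rather than i.i.d., so one cannot invoke the SLLN for the reduced noise directly; the technical work goes into controlling the $m$-dependent projection corrections, which succeeds precisely because $\mathrm{rank}(Q_1 Q_1^{\top}) = k$ stays fixed while $m\to\infty$.
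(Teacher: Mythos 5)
Your overall reduction is sound and is in fact the same mechanism the paper itself uses: the paper does not prove Theorem~\ref{thm:Huffel} directly (it is cited from Van Huffel--Zha), but its Lemma~\ref{lem:F} and Theorem~\ref{thm:main2} carry out exactly your computation in the more general row-and-column-constrained setting, with the identity $C_{22}^{\top}Q_{1}Q_{1}^{\top}C_{22}=C_{22}^{\top}C_{21}(C_{21}^{\top}C_{21})^{-1}C_{21}^{\top}C_{22}$ playing the role of your ``projection correction.'' You are also right that one cannot literally invoke Theorem~\ref{thm:Gleser} on the reduced problem (the rows of $Q_{2}^{\top}E$ are not i.i.d.), and that the correct move is to establish the almost-sure convergence of $m^{-1}[\tilde{A}_{2}\ \tilde{B}]^{\top}[\tilde{A}_{2}\ \tilde{B}]$ to (Schur complement)$\,+\,\sigma^{2}I$ and then rerun Gleser's eigenvector-continuity argument.

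The one genuine gap is your justification that the correction terms $m^{-1}E^{\top}Q_{1}Q_{1}^{\top}E$ and $m^{-1}\bar{C}_{\rm red}^{\top}Q_{1}Q_{1}^{\top}E$ vanish almost surely ``by a variance bound plus Borel--Cantelli.'' Assumption~\ref{as:E} only supplies second moments, so a Chebyshev/Markov bound gives tail probabilities of order $1/m$ (or $1/(m\e^{2})$), which are not summable; Borel--Cantelli does not close. The fixed rank $k$ of $Q_{1}Q_{1}^{\top}$ makes the terms $O_{P}(1)$ after multiplication by $m$, which yields convergence in probability but not the almost-sure statement you need. The repair is the one the paper uses: write $Q_{1}Q_{1}^{\top}=\bar{A}_{1}(\bar{A}_{1}^{\top}\bar{A}_{1})^{-1}\bar{A}_{1}^{\top}$ and factor
\begin{align*}
m^{-1}E^{\top}Q_{1}Q_{1}^{\top}E=(m^{-1}E^{\top}\bar{A}_{1})\,(m^{-1}\bar{A}_{1}^{\top}\bar{A}_{1})^{-1}\,(m^{-1}\bar{A}_{1}^{\top}E),
\end{align*}
where $m^{-1}\bar{A}_{1}^{\top}E\to 0$ with probability one by the weighted Kolmogorov SLLN (Lemma~\ref{lem:kolmogolov}, using that $m^{-1}\bar{A}_{1}^{\top}\bar{A}_{1}$ is bounded under Assumption~\ref{as:CTC}) and the middle factor converges to a fixed positive definite inverse. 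The same factorization handles $m^{-1}\bar{C}_{\rm red}^{\top}Q_{1}Q_{1}^{\top}E$. With that substitution your argument, including the back-substitution for $X_{1,\rm ctls}$, goes through.
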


Next, we consider another constraint in the TLS problem.
Here, let $A$ be divided into
\begin{align}\nonumber
A=\begin{bmatrix}
{A}_{1} \\
{A}_{2}
\end{bmatrix},\quad
{A}_{1}\in \R^{j\times n},\quad {A}_{2}\in \R^{(m-j)\times n}.
\end{align}
Similarly,
\begin{align}\nonumber
B=\begin{bmatrix}
{B}_{1} \\
{B}_{2}
\end{bmatrix},\quad
{B}_{1}\in \R^{j\times \ell},\quad {B}_{2}\in \R^{(m-j)\times \ell}.
\end{align}

We consider the following optimization problem:
\begin{align}\label{eq:ctls2}
\min_{{X}_{\rm ctls}, \Delta A, \Delta {B}} {\|\Delta A\|_{\rm F}}^{2}+{\|\Delta {B}\|_{\rm F}}^{2}
\quad {\rm such \ that}\  \begin{bmatrix}
{A}_{1} \\
 {A}_{2}+\Delta A 
\end{bmatrix}{X}_{\rm ctls}=
\begin{bmatrix}
B_{1}
\\
B_2+\Delta {B}_{2}
\end{bmatrix},
\end{align}
where the first $j$ rows of $A$ and $B$ are fixed.
Analogously to~\eqref{eq:ctls}, the constrained TLS problem~\eqref{eq:ctls2} can be solved by the orthogonal
transformations; see~\cite{LJY2022} for the details.

Here we consider the corresponding regression model
described as follows:
\begin{align}\nonumber
\begin{bmatrix}
\bar{A}_1 \\
\bar{A}_2 
\end{bmatrix}
{X}=\begin{bmatrix}
\bar{B}_1 \\
\bar{B}_2 
\end{bmatrix}
,\quad 
A=\begin{bmatrix}
\bar{A}_1 \\ 
\bar{A}_2+E_{A_2} 
\end{bmatrix}
,\quad 
{B}=\begin{bmatrix}
\bar{B}_1 \\
\bar{B}_2+E_{B_{2}} 
\end{bmatrix},
\end{align}
where all the elements of $E_{A_2}$ and ${E}_{{B}_2}$
are random variables, and only $A$ and ${B}$ can be observed.
Related to this formulation, noting that $\bar{A}_{1}=A_{1}$,
we assume ${\rm rank}(A_{1})=j<n$.
If not, we select independent rows in ${A}_{1}$ to recover $X$.
Thus, ${\rm rank}(A_{1})=j<n$ is not restrictive.
For the random matrices, let
\begin{align}\label{eq:E3}
E:=
\begin{bmatrix}
E_{A_2} & {E}_{{B}_2} 
\end{bmatrix}.
\end{align}
In addition, similarly to~\eqref{eq:barC}, let
\begin{align*}
\bar{C}:=
\begin{bmatrix}
\bar{A}_2 & \bar{B}_2 
\end{bmatrix}.
\end{align*}
Moreover, define $P\in \R^{(n+\ell)\times (n+\ell-j)}$ such that
\begin{align}\nonumber
P^{\top}P=I,
\quad
\begin{bmatrix}
A_1 & {B}_1 
\end{bmatrix}P=0_{j\times (n+\ell-j)}.
\end{align}
The matrix $P$ corresponds to the orthonormal basis vectors
of the orthogonal complementary subspace of $[A_1 \ \ {B}_1]$.
We extend Assumption~\ref{as:CTC} to the following form.
\begin{assumption}\label{as:PCCP}
$\lim_{m\to \infty}m^{-1}{P}^{\top}\bar{C}^{\top}\bar{C}{P}=:\bar{T}$ exists, and ${\rm rank}(\bar{T})=n-j$. 
\end{assumption}

Since the case of $j=0$ corresponds to $P=I$,
Assumption~\ref{as:PCCP} for $j=0$ is reduced to 
Assumption~\ref{as:CTC}.
The next theorem states the strong consistency of the estimation ${X}_{\rm ctls}$ in \eqref{eq:ctls2}.
\begin{theorem}[Strong consistency~{\cite[Theorem~2]{Aishima2022}}]\label{thm:Aishima}
For $\ell=1$ and a fixed $j$, 
under Assumptions~\ref{as:E} and~\ref{as:PCCP},
we have
\begin{align*}
\lim_{m\to \infty}{X}_{\rm ctls}={X} \ \text{with probability one},
\end{align*}
where ${X}_{\rm ctls}$ is the solution vector of \eqref{eq:ctls2}.
\end{theorem}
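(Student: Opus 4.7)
The plan is to exploit the row constraint to convert the problem into an eigenvalue problem on a fixed-size matrix, and then apply a strong-law argument to that matrix. The key observation is that any admissible $X_{\rm ctls}$ satisfies $A_1 X_{\rm ctls}=B_1$ (since the first $j$ rows are not perturbed), so for $\ell=1$ the augmented vector $[X_{\rm ctls}^{\top},-1]^{\top}$ lies in the null space of $[A_1\ B_1]$, which is the range of $P$. Writing $[X_{\rm ctls}^{\top},-1]^{\top}=P\vec{y}$, the constraint on the remaining rows becomes
\begin{align*}
\bigl([A_2\ B_2]+[\Delta A\ \Delta B_2]\bigr)P\vec{y}=\vec{0},
\end{align*}
and the Frobenius-norm objective reduces to minimizing $\|[\Delta A\ \Delta B_2]\|_{\rm F}^{2}$ subject to this single constraint. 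A standard calculation shows the minimizer is the right singular vector of $C_2 P$ associated with its smallest singular value, i.e.\ the eigenvector of $M_m:=P^{\top}C_2^{\top}C_2P$ associated with its smallest eigenvalue, where $C_2:=[A_2\ B_2]$; the estimate $X_{\rm ctls}$ is then recovered by rescaling so that the last entry of $P\vec{y}$ equals $-1$.

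Next I would apply a strong law of large numbers to the $(n+1-j)\times(n+1-j)$ matrix $m^{-1}M_m$. Expanding $C_2=\bar{C}+E$ gives
\begin{align*}
m^{-1}M_m=m^{-1}P^{\top}\bar{C}^{\top}\bar{C}P+m^{-1}P^{\top}\bar{C}^{\top}EP+m^{-1}P^{\top}E^{\top}\bar{C}P+m^{-1}P^{\top}E^{\top}EP.
\end{align*}
Since $P$ does not depend on $m$, Assumption~\ref{as:PCCP} handles the first term; Assumption~\ref{as:E} together with the Kolmogorov SLLN handles $m^{-1}E^{\top}E\to\sigma^2 I$ (so the last term tends to $\sigma^2 I$); and an SLLN for sums of independent zero-mean random vectors, combined with the boundedness implied by Assumption~\ref{as:PCCP}, makes the cross terms vanish a.s. Hence $m^{-1}M_m\to\bar{T}+\sigma^2 I$ almost surely.

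The crucial algebraic fact is that $\vec{y}^{*}$ defined by $P\vec{y}^{*}=[X^{\top},-1]^{\top}$ lies in $\ker\bar{T}$: indeed $\bar{C}P\vec{y}^{*}=\bar{A}_2 X-\bar{B}_2=0$. Since $\bar{T}$ is positive semidefinite of rank $n-j$ on an $(n+1-j)$-dimensional space, its null space is one-dimensional, so $\vec{y}^{*}$ (up to sign and normalization) is the unique unit eigenvector of $\bar{T}+\sigma^2 I$ associated with the simple smallest eigenvalue $\sigma^2$. Applying standard perturbation theory for a simple eigenvalue to the a.s.\ convergent sequence $m^{-1}M_m\to\bar{T}+\sigma^2 I$, the unit eigenvector $\vec{y}_m$ of $m^{-1}M_m$ at its smallest eigenvalue converges a.s.\ to $\vec{y}^{*}$, after which rescaling to make the last entry of $P\vec{y}_m$ equal $-1$ yields $X_{\rm ctls}\to X$ almost surely.

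The main obstacle I anticipate is the careful justification of the cross-term SLLN $m^{-1}\bar{C}^{\top}E\to0$ a.s., because the rows of $\bar{C}$ may be unbounded in $m$ and the usual Kolmogorov SLLN does not apply verbatim; one needs to invoke a weighted SLLN whose applicability follows from the second-moment control that Assumption~\ref{as:PCCP} provides on $m^{-1}\sum_i\bar{\vec{c}}_i\bar{\vec{c}}_i^{\top}$. A secondary, but harmless, technicality is confirming that the limiting scaling is well defined, i.e.\ that the last coordinate of $P\vec{y}^{*}$ is nonzero; this is built in by construction since $P\vec{y}^{*}=[X^{\top},-1]^{\top}$.
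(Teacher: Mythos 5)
Your argument is correct and follows essentially the same route the paper takes: for this theorem the paper defers to \cite{Aishima2022}, but its own generalization in Section~\ref{sec:mainresult2} (Lemma~\ref{lem:PDP} and Theorem~\ref{thm:main2} specialized to $k=0$) is exactly your projection-plus-SLLN argument --- reduce to the eigenproblem for $P^{\top}C_2^{\top}C_2P$ with $C_2:=[A_2\ B_2]$, show $m^{-1}P^{\top}C_2^{\top}C_2P\to\bar{T}+\sigma^2 I$ almost surely via the weighted Kolmogorov SLLN (Lemma~\ref{lem:kolmogolov}), and conclude by continuity of the eigenvector at the simple smallest eigenvalue $\sigma^2$. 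One small point of hygiene: Assumption~\ref{as:PCCP} controls the column sums of squares of $\bar{C}P$ (the diagonal of $P^{\top}\bar{C}^{\top}\bar{C}P$), not of $\bar{C}$ itself, so the weighted SLLN should be applied to $m^{-1}E^{\top}(\bar{C}P)$ rather than to $m^{-1}E^{\top}\bar{C}$ --- which suffices because the cross term only ever enters as $P^{\top}\bar{C}^{\top}EP$, and this is precisely how the paper handles the corresponding step in Lemma~\ref{lem:PDP}.
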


For the general $\ell \ge 1$, we prove the strong consistency in Section~\ref{sec:mainresult2}
in a more extended case.

\subsection{Total least squares problems with row and column constraints}
\label{sec:CTLSrc}
Here we consider a general optimization problem
covering \eqref{eq:ctls} and \eqref{eq:ctls2},
described as follows:
\begin{align}\label{eq:ctlscr}
\min_{{X}_{\rm ctls}, \Delta A, \Delta {B}} {\|\Delta A\|_{\rm F}}^{2}+{\|\Delta {B}\|_{\rm F}}^{2}
\quad {\rm such \ that}\  \begin{bmatrix}
{A}_{11} & {A}_{12} \\
{A}_{21} & {A}_{22}+\Delta A 
\end{bmatrix}{X}_{\rm ctls}=
\begin{bmatrix}
{{B}}_{1} \\
{{B}}_{2}+\Delta {B} \\
\end{bmatrix},
\end{align}
where 
\begin{align}\nonumber
A_{11}\in \R^{j\times k},\quad 
A_{12}\in \R^{j\times (n-k)},\quad
A_{21}\in \R^{(m-j)\times k},\quad
A_{22}\in \R^{(m-j)\times (n-k)},\quad
B_{1}\in \R^{j\times \ell},\quad
B_{2}\in \R^{(m-j)\times \ell}.
\end{align}
This problem can be solved by the preconditioning
with the aid of the Gaussian elimination, i.e., the LU factorization.
Below we explain the preconditioning.
First, let us consider the case in which
$A_{11}$ is square and nonsingular.
Then noting
\begin{align}\nonumber
\begin{bmatrix}
I & 0_{j\times (n-k)} \\
-A_{21}{A_{11}}^{-1} & I
\end{bmatrix}
\begin{bmatrix}
{A}_{11} & {A}_{12} \\
{A}_{21} & {A}_{22}+\Delta A 
\end{bmatrix}
=
\begin{bmatrix}
{A}_{11} & {A}_{12} \\
0_{(m-j)\times k} & {A}_{22}-{A}_{21}{{A}_{11}}^{-1}{A}_{12}+\Delta A 
\end{bmatrix},
\end{align}
we have
\begin{align}\label{eq:ctlsreduced}
\begin{bmatrix}
{A}_{11} & {A}_{12} \\
0_{(m-j)\times k} & {A}_{22}-{A}_{21}{{A}_{11}}^{-1}{A}_{12}+\Delta A \\
\end{bmatrix}{X}_{\rm ctls}=
\begin{bmatrix}
{{B}}_{1} \\
{{B}}_{2}-{A}_{21}{{A}_{11}}^{-1}{{B}}_{1}+\Delta {B} \\
\end{bmatrix}.
\end{align}
Thus the problem is reduced to the TLS problem
for the lower right $(m-j)\times (n-k)$ submatrix
in the left-hand sides.
In other words, letting $\widetilde{X}_{\rm ctls}\in \R^{(n-k)\times \ell}$ denote
the lower submatrix of $X_{\rm ctls}$, we have
\begin{align}\nonumber
({A}_{22}-{A}_{21}{{A}_{11}}^{-1}{A}_{12}+\Delta A)\widetilde{X}_{\rm ctls}=
{{B}}_{2}-{A}_{21}{{A}_{11}}^{-1}{{B}}_{1}+\Delta {B},
\end{align}
regarded as the TLS problem.

If $A_{11}$ is not square, we consider the following computations,
including the rank deficient case.
Let $r:={\rm rank}(A_{11})$.
Without loss of generality, since the singular value decomposition (SVD)
is possible for any matrix, we compute the SVD of $A_{11}$:
\begin{align}\nonumber
A_{11}=U\Sigma V^{\top},
\end{align}
where $U\in \R^{j\times j}$ and $V\in \R^{k\times k}$ are orthogonal matrices and
\begin{align}\nonumber
\Sigma=
\begin{bmatrix}
\Sigma_{r} & 0_{r \times (k-r)} \\
0_{(j-r) \times r} &  0_{(j-r)\times (k-r)}
\end{bmatrix},\quad
\Sigma_{r}={\rm diag}(\sigma_{1},\ldots ,\sigma_{r}),\quad \sigma_{1}\ge \cdots \ge \sigma_{r}>0.
\end{align}
Using the SVD, we have
\begin{align}\nonumber
\begin{bmatrix}
U^{\top} & 0_{j\times (m-j)}  \\
0_{(m-j)\times j} &  I_{m-j}
\end{bmatrix}
\begin{bmatrix}
{A}_{11} & {A}_{12} \\
{A}_{21} & {A}_{22}+\Delta A
\end{bmatrix}
\begin{bmatrix}
V & 0_{k\times (n-k)}  \\
0_{(n-k)\times k} &  I_{n-k}
\end{bmatrix}
=
\begin{bmatrix}
\Sigma & U^{\top}A_{12}  \\
A_{21}V &  A_{22}+\Delta A
\end{bmatrix}.
\end{align}
In the right-hand side,
since the $r\times r$ leading principal submatrix is nonsingular (and diagonal),
the Gaussian elimination can be applied as the preconditioning.
As a result, without loss of generality,
it is enough to consider the matrix $A$
such that the upper left submatrix is the zero matrix.
Thus we replace the constraint in \eqref{eq:ctlscr} with
\begin{align}\label{eq:Xctlsprob}
\begin{bmatrix}
0_{j\times k} & {A}_{12} \\
{A}_{21} & {A}_{22}+\Delta A \\
\end{bmatrix} {X}_{\rm ctls}=
\begin{bmatrix}
{{B}}_{1} \\
{{B}}_{2}+\Delta {B} \\
\end{bmatrix}.
\end{align}
This ${X}_{\rm ctls}$ for the minimum perturbation 
in terms of ${\|\Delta A\|_{\rm F}}^{2}+{\|\Delta B\|_{\rm F}}^{2}$
can be obtained by the QR decomposition and the SVD; see~\cite{Demmel1987} for details.

One of the contributions of this study 
is to prove strong consistency
of $X_{\rm ctls}$ in the statistical sense
under reasonable assumptions,
as presented in Section~\ref{sec:mainresult2}.

\section{Problem setting and the aim of this study}\label{sec:aim}
With the research background in the previous section, 
we formulate the target regression model
to construct an estimator with consistency analysis. 

Basically, we consider a regression model
associated with the constrained TLS problem of \eqref{eq:ctlscr}
in Section~\ref{sec:CTLSrc}.
Hence, let
\begin{align}\label{eq:defbarA}
\bar{A}=
\begin{bmatrix}
\bar{A}_{11} & \bar{A}_{12} \\
\bar{A}_{21} & \bar{A}_{22} \\
\end{bmatrix}\in \R^{m\times n},\
\bar{A}_{11}\in \R^{j\times k},\
\bar{A}_{12}\in \R^{j\times (n-k)},\
 \bar{A}_{21}\in \R^{(m-j)\times k},\
\bar{A}_{22}\in \R^{(m-j)\times (n-k)}.
\end{align}
In addition, let
\begin{align}\label{eq:defbarB}
\bar{{B}}=
\begin{bmatrix}
\bar{{B}}_{1} \\
\bar{{B}}_{2} \\
\end{bmatrix},\
\bar{{B}}_{1}\in \R^{j\times \ell},\
\bar{{B}}_{2}\in \R^{(m-j)\times \ell}.
\end{align}
Using the definitions above, we consider the following  regression model:
\begin{align}\label{eq:defbarAXbarB}
{X}\in \R^{n\times \ell},\quad \bar{A}{X}=\bar{{B}},
\quad
A=
\begin{bmatrix}
\bar{A}_{11} & \bar{A}_{12} \\
\bar{A}_{21} & \bar{A}_{22}+E_{A_{22}} \\
\end{bmatrix},\
{{B}}=
\begin{bmatrix}
\bar{{B}}_{1} \\
\bar{{B}}_{2}+{E}_{{B}_2} \\
\end{bmatrix},
\end{align}
where $E_{A_{22}}$ and $E_{B_{2}}$ are random matrices.
Without loss of generality,
from~\eqref{eq:ctlscr} and \eqref{eq:defbarAXbarB}, we assume
\begin{align}\label{eq:A11A12}
{\rm rank}\left(
\begin{bmatrix}
\bar{A}_{11} & \bar{A}_{12} 
\end{bmatrix}\right)
=
{\rm rank}\left(
\begin{bmatrix}
{A}_{11} & {A}_{12} 
\end{bmatrix}\right)
=j<n.
\end{align}
If not, we select independent rows in $[\bar{A}_{12}\ \ \bar{A}_{22}]$ to recover $X$.
Thus, the rank requirement above is not restrictive.
For the random matrices, let
\begin{align*}
E=
\begin{bmatrix}
E_{A_{22}} & E_{B_{2}}
\end{bmatrix}.
\end{align*}
In addition, let
\begin{align*}
\bar{C}=
\begin{bmatrix}
\bar{A} & \bar{B}
\end{bmatrix}
\in \R^{m\times (n+\ell)},\quad
{Y}=
\begin{bmatrix}
X \\
-I
\end{bmatrix}
\in \R^{(n+\ell)\times \ell}.
\end{align*}
From $\bar{A}X=\bar{B}$ in \eqref{eq:defbarAXbarB},
we can see that
\begin{align*}
\bar{C}{Y}=0_{m\times \ell}.
\end{align*}
Moreover, for the consistency analysis,
let $\bar{C}$ be divided into 
\begin{align}\label{eq:barC2}
\bar{C}=
\begin{bmatrix}
\bar{C}_{11} & \bar{C}_{12} \\
\bar{C}_{21} & \bar{C}_{22} 
\end{bmatrix},
\end{align}
where
\begin{align}\label{eq:defbarC}
\bar{C}_{11}=\bar{A}_{11}\in \R^{j\times k},\
\bar{C}_{12}=
\begin{bmatrix}
\bar{A}_{12} & \bar{{B}}_{1}
\end{bmatrix}
\in \R^{j\times (n-k+\ell)},\
 \bar{C}_{21}= \bar{A}_{21}\in \R^{(m-j)\times k},\
\bar{C}_{22}=
\begin{bmatrix}
\bar{A}_{22} & \bar{{B}}_{2}
\end{bmatrix}
\in \R^{(m-j)\times (n-k+\ell)}.
\end{align}
Similarly, define
\begin{align}\label{eq:C}
{C}=
\begin{bmatrix}
{C}_{11} & {C}_{12} \\
{C}_{21} & {C}_{22} 
\end{bmatrix}
\end{align}
such that
\begin{align}\label{eq:defC}
{C}_{11}={A}_{11}\in \R^{j\times k},\
{C}_{12}=
\begin{bmatrix}
{A}_{12} & {B}_{1} 
\end{bmatrix}
\in \R^{j\times (n-k+\ell)},\
{C}_{21}= {A}_{21}\in \R^{(m-j)\times k},\
{C}_{22}=
\begin{bmatrix}
{A}_{22} & {B}_{2} 
\end{bmatrix}
\in \R^{(m-j)\times (n-k+\ell)},
\end{align}
where
\begin{align}\label{eq:CE}
C_{11}=\bar{C}_{11},\quad
C_{12}=\bar{C}_{12},\quad
C_{21}=\bar{C}_{21},\quad
C_{22}=\bar{C}_{22}+E
\end{align}
from the constraints in the problem setting~\eqref{eq:defbarAXbarB}.

For the statistical asymptotic analysis, 
we define the statistical terms exactly.
In general, the strong convergence of random variables
is defined as follows.

\begin{definition}[Strong convergence]\label{def:sc}
The sequence of random variables $\mathcal{X}^{(m)}$
for $m=0,1,\ldots$ converges to $\mathcal{X}$ with probability one
if and only if
\begin{align*}
\mathcal{P}(\{ \omega \in \Omega \mid \lim_{m\to \infty}\mathcal{X}^{(m)}(\omega)=\mathcal{X}(\omega) \})=1,
\end{align*}
where $\mathcal{P}$ is a probability measure on a sample space $\Omega$.
\end{definition}

In this paper, we consider the case that
$\mathcal{X}$ in Definition~\ref{def:sc}
is not a random variable but a constant
in a regression model.
In the following, the random variable argument $\omega$ is omitted for simplicity.
Let $\widehat{\mathcal{X}}^{(m)}$ denote 
an estimator of $\mathcal{X}$
with the use of a sample of size $m$.
If $\widehat{\mathcal{X}}^{(m)}$
converges to $\mathcal{X}$ with probability one,
the estimator $\widehat{\mathcal{X}}^{(m)}$
is said to be strongly consistent.
Then $\widehat{\mathcal{X}}^{(m)}$ has the strong consistency.

In the following, we discuss the strong consistency
of estimators of $X$ in a regression model.
For the consistency analysis, we use the next lemma,
essentially proved in~\cite[Lemma~3.1]{Gleser1981}.
The lemma is also due to the Kolmogolov strong law of large numbers.

\begin{lemma}\label{lem:kolmogolov}
Let $\{\alpha_{m}\}_{m=1}^{\infty}$ denote a sequence of real numbers
such that $m^{-1}\sum_{i=1}^{m}{\alpha_{i}}^2$ are bounded, i.e.,
$m^{-1}\sum_{i=1}^{m}{\alpha_{i}}^2 < \infty$ for all $m$.
In addition,
let $\{\epsilon^{(m)}\}_{m=1}^{\infty}$ denote a sequence of 
independent and identically distributed (i.i.d.) random variables
with $\mathbb{E}(\epsilon^{(m)})=0$ for every $m$,
where $\mathbb{E}$ means the expectation of random variables.
Moreover, suppose that, for all $m$, 
the variances of $\epsilon^{(m)}$,
i.e., $\mathbb{E}({\epsilon^{(m)}}^{2})$
are bounded.
Then 
\begin{align}\label{eq:keyslln}
\lim_{m\to \infty}m^{-1}\sum_{i=1}^{m}\alpha_{i}\epsilon^{(i)}=0 \ \text{with probability one}.
\end{align}
\end{lemma}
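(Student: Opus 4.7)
The plan is to reduce this to a standard two-step argument: Kolmogorov's convergence criterion followed by Kronecker's lemma, applied to the independent mean-zero random variables $Y_{i}:=\alpha_{i}\epsilon^{(i)}$. Since the $\epsilon^{(i)}$ are i.i.d.\ with common variance $\sigma^{2}:=\E[(\epsilon^{(1)})^{2}]<\infty$, the $Y_{i}$ are independent with $\E[Y_{i}]=0$ and $\text{Var}(Y_{i})=\sigma^{2}\alpha_{i}^{2}$. Kolmogorov's convergence theorem guarantees that $\sum_{i=1}^{\infty}Y_{i}/i$ converges almost surely provided
\begin{align*}
\sum_{i=1}^{\infty}\frac{\text{Var}(Y_{i})}{i^{2}}=\sigma^{2}\sum_{i=1}^{\infty}\frac{\alpha_{i}^{2}}{i^{2}}<\infty,
\end{align*}
and once this is established, Kronecker's lemma (with weights $b_{n}=n\nearrow\infty$) converts the a.s.\ convergence of $\sum_{i} Y_{i}/i$ into the desired Ces\`aro-type statement $m^{-1}\sum_{i=1}^{m}\alpha_{i}\epsilon^{(i)}\to 0$ almost surely.

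The substantive step is therefore to deduce $\sum_{i=1}^{\infty}\alpha_{i}^{2}/i^{2}<\infty$ from the hypothesis that $s_{m}:=\sum_{i=1}^{m}\alpha_{i}^{2}\le Mm$ for some constant $M$. I plan to use Abel summation: writing $\alpha_{i}^{2}=s_{i}-s_{i-1}$ and reindexing,
\begin{align*}
\sum_{i=1}^{n}\frac{\alpha_{i}^{2}}{i^{2}}=\frac{s_{n}}{n^{2}}+\sum_{i=1}^{n-1}s_{i}\left(\frac{1}{i^{2}}-\frac{1}{(i+1)^{2}}\right).
\end{align*}
Using $1/i^{2}-1/(i+1)^{2}=(2i+1)/(i^{2}(i+1)^{2})\le 3/i^{3}$ together with $s_{i}\le Mi$, each summand is bounded by $3M/i^{2}$, and $s_{n}/n^{2}\le M/n\to 0$. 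Hence the partial sums are dominated by $M/n+3M\sum_{i\ge 1}1/i^{2}$, which is finite, so the series converges.

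Combining the two pieces: the variance-sum bound lets Kolmogorov's theorem deliver a.s.\ convergence of $\sum_{i}\alpha_{i}\epsilon^{(i)}/i$, and Kronecker's lemma then yields \eqref{eq:keyslln}. The main obstacle, such as it is, is the Abel summation estimate; everything else is a direct invocation of classical probability theorems. One minor technical point to handle cleanly is that the hypothesis is stated as ``$m^{-1}\sum_{i=1}^{m}\alpha_{i}^{2}<\infty$ for all $m$,'' which I interpret (as is standard, and as required for the conclusion) to mean a uniform bound $\sup_{m}m^{-1}\sum_{i=1}^{m}\alpha_{i}^{2}<\infty$; otherwise the conclusion fails, as one may see by taking $\alpha_{i}=i$.
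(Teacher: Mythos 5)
Your proposal is correct and follows essentially the same route as the paper: the same Abel summation argument to deduce $\sum_{i}\alpha_{i}^{2}/i^{2}<\infty$ from the uniform bound on $m^{-1}\sum_{i\le m}\alpha_{i}^{2}$, followed by the classical strong-law machinery (the paper cites the Corollary to Theorem~5.4.1 in Chung, which is exactly the Kolmogorov convergence criterion combined with Kronecker's lemma that you spell out). Your remark that the hypothesis must be read as a uniform bound $\sup_{m}m^{-1}\sum_{i=1}^{m}\alpha_{i}^{2}<\infty$ is the correct reading of the paper's intent.
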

\begin{proof}
First of all, let
\begin{align*}
\beta_{0}=0,\quad \beta_{m}=\sum_{i=1}^{m}{\alpha_{i}}^{2}\quad (m=1,2,\ldots).
\end{align*}
Then, from the assumption, obviously 
\begin{align}\label{eq:beta}
m^{-1}\beta_{m}<\infty \quad (m=1,2,\ldots).
\end{align}
In addition, noting Abel's partial summation formula,
we have
\begin{align*}
\sum_{i=1}^{m}i^{-2}{\alpha_{i}}^{2}=
\sum_{i=1}^{m}i^{-2}(\beta_{i}-\beta_{i-1})=
\sum_{i=1}^{m}i^{-2}\beta_{i}-\sum_{i=0}^{m-1}(i+1)^{-2}\beta_{i}=
m^{-2}\beta_{m}-\sum_{i=1}^{m-1}(i^{-2}-(i+1)^{-2})\beta_{i}.
\end{align*}
The second term on the right-hand side of the above equation is
$\sum_{i=1}^{m-1}(i^{-1}(i+1)^{-2}(2i+1))i^{-1}\beta_{i}$.
From \eqref{eq:beta} and 
$\sum_{i=1}^{m-1}i^{-1}(i+1)^{-2}(2i+1)<\infty$,
we have $\sum_{i=1}^{m}i^{-2}{\alpha_{i}}^{2}<\infty$.
Thus, from the assumption that $\mathbb{E}(\epsilon^{(i)}{}^{2})<\infty \ (i=1,2,\ldots )$, 
\begin{align*}
\sum_{i=1}^{m}i^{-2}\mathbb{E}((\alpha_{i}\epsilon^{(i)})^{2})=
\sum_{i=1}^{m}i^{-2}{\alpha_{i}}^{2}\mathbb{E}(\epsilon^{(i)}{}^{2})<\infty.
\end{align*}
This boundedness of $\sum_{i=1}^{m}i^{-2}\mathbb{E}((\alpha_{i}\epsilon^{(i)})^{2})$
leads to \eqref{eq:keyslln}
by \cite[eq. (7) in Corollary to Theorem~5.4.1]{Chung2001}.
\end{proof}

With the above preparation for consistency analysis, 
the main findings of this study are presented in the next two sections.

\section{The proposed estimator using the orthogonal projection}\label{sec:mainresult}
In this section, we present a new method for estimating $X$ and prove its strong consistency, 
with emphasis on orthogonal projections.
Before discussing the specific design of the estimation method, 
we discuss the asymptotic properties of the regression model~\eqref{eq:defbarAXbarB} as $m\to \infty$.

\subsection{Problem formulation in terms of deterministic asymptotic analysis}
Here we have a discussion of~\eqref{eq:defbarAXbarB} in the asymptotic regime as $m\to \infty$.
First, let
\begin{align*}
\bar{C}_{\rm upper}=
\begin{bmatrix}
\bar{C}_{11} & \bar{C}_{12} 
\end{bmatrix},
\quad
\bar{C}_{\rm lower}=
\begin{bmatrix}
\bar{C}_{21} & \bar{C}_{22} 
\end{bmatrix},
\quad
{C}_{\rm lower}=
\begin{bmatrix}
{C}_{21} & {C}_{22} 
\end{bmatrix}
\end{align*}
for \eqref{eq:barC2} and \eqref{eq:C}.
From~\eqref{eq:A11A12} and the definitions of $\bar{C}$, we have
\begin{align}\nonumber
{\rm rank}(\bar{C}_{\rm upper})=j.
\end{align}
In other words, $\bar{C}_{\rm upper}$ has the full rank of the rows.
In addition, define $P$ such that
\begin{align}\label{eq:defP}
P\in \R^{(n+\ell)\times (n+\ell-j)},
\quad
P^{\top}P=I_{n+\ell-j},
\quad
\bar{C}_{\rm upper}P=0_{j\times (n+\ell-j)}.
\end{align}
Note that the columns of $P$ are
orthonormal basis vectors of the null space of $\bar{C}_{\rm upper}$.
Moreover, letting
\begin{align}\label{eq:barF}
\bar{F}:={\bar{C}_{\rm lower}}{}^{\top}\bar{C}_{\rm lower}=
\begin{bmatrix}
{\bar{C}_{21}}{}^{\top}\bar{C}_{21} & {\bar{C}_{21}}{}^{\top}\bar{C}_{22} \\
{\bar{C}_{22}}{}^{\top}\bar{C}_{21} & {\bar{C}_{22}}{}^{\top}\bar{C}_{22}  
\end{bmatrix},
\end{align}
we assume the following.

\begin{assumption}\label{as:PFP}
$\lim_{m\to \infty}m^{-1}{\bar{C}_{21}}{}^{\top}\bar{C}_{21}$ exists, and positive definite. 
In addition,
$\lim_{m\to \infty}m^{-1}\bar{F}=:\bar{S}$ exists, and ${\rm rank}({P}^{\top}\bar{S}P)=n-j$. 
\end{assumption}

In this assumption, $\bar{F}$ 
corresponds to $\bar{C}^{\top}\bar{C}$ in
Assumption~\ref{as:CTC} for the TLS problem.
From this point of view,
Assumption~\ref{as:PFP}
is an extension of Assumption~\ref{as:CTC}.
More precisely,
if the conditions on $\bar{C}_{21}$ and $\bar{C}_{\rm upper}$ are excluded from Assumption~\ref{as:PFP},
then $P$ should be $I$, leading to
${\rm rank}({P}^{\top}\bar{S}P)=n$ in the same way as in Assumption~\ref{as:CTC}.

For the description of the meaning of Assumption~\ref{as:PFP},
let
\begin{align}\label{eq:Aul0}
\bar{A}_{\rm upper}=
\begin{bmatrix}
\bar{A}_{11} & \bar{A}_{12}  
\end{bmatrix},\quad
\bar{A}_{\rm lower}=
\begin{bmatrix}
\bar{A}_{21} & \bar{A}_{22}  
\end{bmatrix}.
\end{align}
The next lemma concerns the existence and uniqueness of $X$
in the asymptotic regime as $m\to \infty$.

\begin{lemma}\label{lem:assumption}
In Assumption~\ref{as:PFP}, ${\rm rank}({P}^{\top}\bar{S}P)=n-j$ is mathematically equivalent to 
\begin{align*}
{\rm rank}\left(
\begin{bmatrix}
\bar{C}_{\rm upper} \\
\bar{S}  
\end{bmatrix}\right)=n.
\end{align*}
For its upper left $(n+j)\times n$ submatrix,
\begin{align}\label{eq:Aul}
{\rm rank}\left(
\begin{bmatrix}
\bar{A}_{\rm upper} \\
\lim_{m\to \infty}m^{-1}\bar{A}_{\rm lower}{}^{\top}\bar{A}_{\rm lower}  
\end{bmatrix}
\right)=n,
\end{align}
where $\bar{A}_{\rm upper}$ and $\bar{A}_{\rm lower}$
are defined in \eqref{eq:Aul0}.
\end{lemma}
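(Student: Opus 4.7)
The plan is to unfold both rank statements in terms of the null spaces involved, and then to exploit the PSD structure of $\bar S$ together with the regression constraint $\bar AX=\bar B$.

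First, I would note that $\bar S=\lim_{m\to\infty} m^{-1}\bar C_{\text{lower}}^{\top}\bar C_{\text{lower}}$ is positive semi-definite (as a limit of PSD matrices). Since $\bar CY=0_{m\times \ell}$ both $\bar C_{\text{upper}}Y=0$ and $\bar C_{\text{lower}}Y=0$ hold, so $\bar S Y=0$ as well. Hence the columns of $Y$ lie in the subspace $\mathcal{V}:=\ker(\bar C_{\text{upper}})\cap \ker(\bar S)=\text{col}(P)\cap \ker(\bar S)$, and because the last $\ell$ rows of $Y$ are $-I$, these $\ell$ columns are linearly independent; thus $\dim \mathcal V\ge \ell$.

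For the equivalence, I would use two identities. On the one hand, the stacked matrix has null space exactly $\mathcal V$, so
\begin{align*}
\text{rank}\left(\begin{bmatrix}\bar C_{\text{upper}} \\ \bar S\end{bmatrix}\right)=(n+\ell)-\dim\mathcal V.
\end{align*}
On the other hand, since $\bar S$ is PSD we have $x^{\top}\bar Sx=0\iff \bar Sx=0$; therefore the kernel of the quadratic form $P^{\top}\bar SP$ on $\R^{n+\ell-j}$ is exactly $\{w:Pw\in\ker(\bar S)\}$, whose dimension equals $\dim(\text{col}(P)\cap\ker(\bar S))=\dim\mathcal V$. Consequently
\begin{align*}
\text{rank}(P^{\top}\bar SP)=(n+\ell-j)-\dim\mathcal V.
\end{align*}
Subtracting gives that $\text{rank}(P^{\top}\bar SP)=n-j$ and $\text{rank}\!\left(\begin{bmatrix}\bar C_{\text{upper}} \\ \bar S\end{bmatrix}\right)=n$ are both equivalent to $\dim\mathcal V=\ell$, which in turn forces $\mathcal V=\text{col}(Y)$.

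For the submatrix statement, I would pick any $v\in\R^{n}$ with $\bar A_{\text{upper}}v=0$ and $\lim_{m\to\infty}m^{-1}\bar A_{\text{lower}}^{\top}\bar A_{\text{lower}}v=0$, and form $\tilde v=\begin{bmatrix} v \\ 0_{\ell}\end{bmatrix}\in\R^{n+\ell}$. Clearly $\bar C_{\text{upper}}\tilde v=\bar A_{\text{upper}}v=0$. Using the block form of $\bar C_{\text{lower}}^{\top}\bar C_{\text{lower}}$ and the regression identity $\bar B_2=\bar A_{\text{lower}}X$ (which comes from $\bar A X=\bar B$ split by rows), the bottom block of $\bar S\tilde v$ becomes $X^{\top}\lim m^{-1}\bar A_{\text{lower}}^{\top}\bar A_{\text{lower}}v=0$, so $\bar S\tilde v=0$. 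Therefore $\tilde v\in\mathcal V=\text{col}(Y)$; writing $\tilde v=Yw$ the last $\ell$ coordinates give $-w=0$, hence $v=0$. This shows the $(n+j)\times n$ submatrix has trivial null space, i.e.\ rank $n$, completing the proof.

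The main obstacle is not any single computation but the careful setup of step one, recognizing that one must characterize the null spaces via the PSD structure of $\bar S$ in order to link the two rank conditions; once $\mathcal V=\text{col}(Y)$ is established, the submatrix claim follows essentially by checking that the embedding $v\mapsto \tilde v$ lands inside $\text{col}(Y)$ using $\bar B_2=\bar A_{\text{lower}}X$.
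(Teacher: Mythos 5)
The paper states this lemma without giving any proof, so there is nothing to compare against; your argument supplies the missing details and is correct. The three key steps all check out: (i) since $\bar{S}$ is positive semi-definite, $\ker(P^{\top}\bar{S}P)$ is carried isomorphically by $w\mapsto Pw$ onto $\mathrm{col}(P)\cap\ker(\bar{S})=\ker(\bar{C}_{\rm upper})\cap\ker(\bar{S})=\mathcal{V}$, which is exactly the null space of the stacked matrix, so both rank conditions reduce to $\dim\mathcal{V}=\ell$; (ii) $\bar{C}Y=0$ for every $m$ with $Y$ fixed gives $\bar{S}Y=0$ in the limit, so $\mathrm{col}(Y)\subseteq\mathcal{V}$ and $\dim\mathcal{V}\ge\ell$, whence $\dim\mathcal{V}=\ell$ forces $\mathcal{V}=\mathrm{col}(Y)$; and (iii) the embedding $v\mapsto\tilde{v}=\bigl[\begin{smallmatrix}v\\ 0\end{smallmatrix}\bigr]$ lands in $\mathcal{V}$ because $\bar{B}_2=\bar{A}_{\rm lower}X$ makes the lower block of $\bar{S}\tilde{v}$ equal to $X^{\top}$ times the (vanishing) upper block, and the $-I$ block of $Y$ then forces $v=0$. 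The only points worth making explicit in a write-up are that $X$ (hence $Y$) does not depend on $m$, and that the existence of $\lim_{m\to\infty}m^{-1}\bar{A}_{\rm lower}^{\top}\bar{A}_{\rm lower}$ is part of Assumption~\ref{as:PFP}.
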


From the above lemma,
the $n$ largest singular values of the matrix in~\eqref{eq:Aul} are nonzero.
Due to the continuities of the singular values, we have the next lemma.

\begin{lemma}[Uniqueness of $X$]\label{lem:projection}
Under Assumption~\ref{as:PFP},
$X$ is uniquely determined
for all sufficiently large $m$.
\end{lemma}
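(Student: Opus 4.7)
The plan is to reduce the claim to the statement that $\bar{A}\in \R^{m\times n}$ has full column rank $n$ for all sufficiently large $m$: under this, $\bar{A}X=\bar{B}$ determines $X$ uniquely. So I would focus on showing $\mathrm{rank}(\bar{A})=n$ eventually.

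The key structural fact is that any $v\in \R^{n}$ with $\bar{A}v=0$ satisfies both $\bar{A}_{\mathrm{upper}}v=0$ and $\bar{A}_{\mathrm{lower}}v=0$, hence also $m^{-1}{\bar{A}_{\mathrm{lower}}}^{\top}\bar{A}_{\mathrm{lower}}v=0$. Defining
\[
M_{m}:=
\begin{bmatrix}
\bar{A}_{\mathrm{upper}} \\
m^{-1}{\bar{A}_{\mathrm{lower}}}^{\top}\bar{A}_{\mathrm{lower}}
\end{bmatrix}\in \R^{(n+j)\times n},
\]
I would obtain the inclusion $\ker \bar{A}\subseteq \ker M_{m}$, so it suffices to prove $\mathrm{rank}(M_{m})=n$ for all sufficiently large $m$.

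To establish this, I would invoke Lemma~\ref{lem:assumption}: together with Assumption~\ref{as:PFP}, it says that the limit matrix $M_{\infty}$, obtained by replacing the lower block of $M_{m}$ with $\lim_{m\to \infty}m^{-1}{\bar{A}_{\mathrm{lower}}}^{\top}\bar{A}_{\mathrm{lower}}$, satisfies $\mathrm{rank}(M_{\infty})=n$, equivalently $\sigma_{n}(M_{\infty})>0$. Since the upper $j$ rows of $M_{m}$ do not depend on $m$ and the lower $n$ rows converge entrywise to those of $M_{\infty}$, we have $M_{m}\to M_{\infty}$; the $n$-th singular value of an $(n+j)\times n$ matrix is a continuous function of its entries, so $\sigma_{n}(M_{m})>0$, i.e.\ $\mathrm{rank}(M_{m})=n$, holds for all sufficiently large $m$. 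Combined with the previous paragraph, this gives $\ker \bar{A}=\{0\}$ and hence uniqueness of $X$.

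The only subtle point I expect is the continuity argument in the final step, which rests on the elementary fact that full column rank is an open condition on $\R^{(n+j)\times n}$. Everything else is bookkeeping given Lemma~\ref{lem:assumption}, so I do not anticipate any serious obstacle.
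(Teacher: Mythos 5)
Your argument is correct and takes essentially the same route as the paper, which likewise deduces from Lemma~\ref{lem:assumption} that the $n$ largest singular values of the matrix in \eqref{eq:Aul} are nonzero and then invokes continuity of singular values to conclude full column rank of the stacked matrix, hence of $\bar{A}$, for all sufficiently large $m$. Your explicit step $\ker\bar{A}\subseteq\ker M_{m}$ is exactly the bookkeeping the paper leaves implicit.
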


\subsection{Design of the estimator with consistency analysis in the statistical sense}
In the following, we construct an estimator
of the unique $X$. To this end,
let us begin the discussion of the CTLS solution for ${C}_{\rm lower}$
because the CTLS solution has strong consistency for the regression model in \eqref{eq:crmodel}
as in Theorem~\ref{thm:Huffel}.

For the consistency analysis, noting
\begin{align*}
{{C}_{\rm lower}}^{\top}{C}_{\rm lower}=
\begin{bmatrix}
{{C}_{21}}^{\top}{C}_{21} & {{C}_{21}}^{\top}{C}_{22} \\
{{C}_{22}}^{\top}{C}_{21} & {{C}_{22}}^{\top}{C}_{22}  
\end{bmatrix},
\end{align*}
we perform the QR decomposition of $C_{21}$:
\begin{align}\label{eq:C21QR}
C_{21}=QR,\quad
Q=
\begin{bmatrix}
{Q}_{1} & {Q}_{2}
\end{bmatrix},\quad
R=
\begin{bmatrix}
{R}_{1} \\
0_{(m-j-k)\times k}
\end{bmatrix}.
\end{align}
From the above definitions, 
\begin{align}\nonumber
Q^{\top}
C_{\rm lower}
=
\begin{bmatrix}
{R}_{1} & {Q_{1}}^{\top}{C}_{22}  \\
0_{(m-j-k)\times k}   & {Q_{2}}^{\top}{C}_{22}  
\end{bmatrix}.
\end{align}
Combined with ${{C}_{\rm lower}}^{\top}{C}_{\rm lower}={{C}_{\rm lower}}^{\top}QQ^{\top}{C}_{\rm lower}$
we have
\begin{align}\nonumber
\begin{bmatrix}
{{C}_{21}}^{\top}{C}_{21} & {{C}_{21}}^{\top}{C}_{22} \\
{{C}_{22}}^{\top}{C}_{21} & {{C}_{22}}^{\top}{C}_{22}  
\end{bmatrix}
&=
\begin{bmatrix}
{{R}_{1}}^{\top}{R}_{1} & {{C}_{21}}^{\top}{C}_{22} \\
{{C}_{22}}^{\top}{C}_{21} & {{C}_{22}}^{\top}{C}_{22}  
\end{bmatrix}
\\ \label{eq:CGauss}
&=
\begin{bmatrix}
{{R}_{1}}^{\top}  \\
{{C}_{22}}^{\top}{Q_{1}}   
\end{bmatrix}
\begin{bmatrix}
{R}_{1} & {Q_{1}}^{\top}{{C}_{22}}
\end{bmatrix}
+
\begin{bmatrix}
0_{k\times k} & 0_{k\times (n+\ell-k)} \\
0_{(n+\ell-k)\times k} & {{C}_{22}}^{\top}{C}_{22}-{{C}_{22}}^{\top}{Q_{1}}{Q_{1}}^{\top}{C}_{22}  
\end{bmatrix}.
\end{align}
We focus on the last matrix on the right side of the above equation. Let
\begin{align*}
G:={{C}_{22}}^{\top}{C}_{22}-{{C}_{22}}^{\top}{Q_{1}}{Q_{1}}^{\top}{C}_{22}.
\end{align*}
Noting $C_{21}=Q_{1}R_{1}$ as in \eqref{eq:C21QR}, we have
\begin{align*}
{{C}_{22}}^{\top}{Q_{1}}{Q_{1}}^{\top}{C}_{22}
={{C}_{22}}^{\top}{C_{21}}{R_{1}}^{-1}({C_{21}}{R_{1}}^{-1})^{\top}{C}_{22}
={{C}_{22}}^{\top}{C_{21}}({C_{21}}^{\top}{C_{21}})^{-1}{C_{21}}^{\top}{C}_{22},
\end{align*}
where $R_{1}$ is nonsingular under Assumption~\ref{as:PFP}
for sufficiently large $m$.
We can therefore see that 
\begin{align}\label{eq:G}
G={{C}_{22}}^{\top}{C}_{22}-{{C}_{22}}^{\top}{C_{21}}({C_{21}}^{\top}{C_{21}})^{-1}{C_{21}}^{\top}{C}_{22}.
\end{align}
Similarly, for $\bar{F}$ in \eqref{eq:barF}, define $\bar{G}$ such that
\begin{align}\label{eq:barG}
\bar{G}={\bar{C}_{22}}{}^{\top}\bar{C}_{22}-{\bar{C}_{22}}{}^{\top}{\bar{C}_{21}}(\bar{C}_{21}{}^{\top}{\bar{C}_{21}})^{-1}\bar{C}_{21}{}^{\top}\bar{C}_{22}.
\end{align}

Let $\lambda_{1}\le \cdots \le \lambda_{\ell}$ denote
the smallest eigenvalues of $G$.
Define $\mu$ such that 
\begin{align}\label{eq:defmu}
\lambda_{1}\le \mu \le \lambda_{\ell}.
\end{align}
In addition, let
\begin{align}\label{eq:defF}
F:=
\begin{bmatrix}
{{C}_{21}}^{\top}{C}_{21} & {{C}_{21}}^{\top}{C}_{22} \\
{{C}_{22}}^{\top}{C}_{21} & {{C}_{22}}^{\top}{C}_{22}-\mu I  
\end{bmatrix}.
\end{align}

The next lemma is important for the construction of an estimator of $X$ and its asymptotic analysis.
\begin{lemma}[Strong consistency of $m^{-1}F$]\label{lem:F}
Under Assumptions~\ref{as:E} and~\ref{as:PFP},
we have 
\begin{align}\label{eq:sigmaF}
\lim_{m\to \infty}m^{-1}\mu={\sigma}^2 \ \text{and} \
\lim_{m\to \infty}m^{-1}F=\bar{S} \ \text{with probability one},
\end{align}
where $F$ is defined as in \eqref{eq:defF}.
\end{lemma}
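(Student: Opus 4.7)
The plan is to split $m^{-1}G$ into a deterministic limit plus vanishing noise. Because $C_{21}=\bar{C}_{21}$, the orthogonal projector $P_{21}:=I-\bar{C}_{21}(\bar{C}_{21}^\top\bar{C}_{21})^{-1}\bar{C}_{21}^\top$, well defined for large $m$ by Assumption~\ref{as:PFP}, is deterministic, so substituting $C_{22}=\bar{C}_{22}+E$ into $G=C_{22}^\top P_{21}C_{22}$ gives
\[
G=\bar{G}+\bar{C}_{22}^\top P_{21}E+E^\top P_{21}\bar{C}_{22}+E^\top P_{21}E.
\]
First I would establish, using Assumption~\ref{as:PFP}, that $m^{-1}\bar{G}$ converges to the Schur complement $\bar{G}_{\infty}:=\bar{S}_{22}-\bar{S}_{21}\bar{S}_{11}^{-1}\bar{S}_{12}$, where $\bar{S}_{11},\bar{S}_{12},\bar{S}_{21},\bar{S}_{22}$ are the blocks of $\bar{S}$ corresponding to the partition of $\bar{F}$ in \eqref{eq:barF}; continuity of matrix inversion on the positive-definite block $\bar{S}_{11}$ makes this routine. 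For each cross term, expanding $P_{21}$ reduces $m^{-1}\bar{C}_{22}^\top P_{21}E$ to a linear combination of entries of $m^{-1}\bar{C}_{ij}^\top E$, each of which tends to zero a.s.\ by Lemma~\ref{lem:kolmogolov} (the required boundedness of $m^{-1}\sum\alpha_i^2$ follows from convergence of $m^{-1}\bar{C}_{ij}^\top\bar{C}_{ij}$). For the quadratic term I would write
\[
m^{-1}E^\top P_{21}E=m^{-1}E^\top E-(m^{-1}E^\top\bar{C}_{21})(m^{-1}\bar{C}_{21}^\top\bar{C}_{21})^{-1}(m^{-1}\bar{C}_{21}^\top E),
\]
and apply the Kolmogorov SLLN entrywise to get $m^{-1}E^\top E\to\sigma^2 I$ a.s.\ under Assumption~\ref{as:E}, while the second summand vanishes by the same Lemma~\ref{lem:kolmogolov} argument. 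Combining yields $m^{-1}G\to\bar{G}_{\infty}+\sigma^2 I$ with probability one.

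The crux is to identify the $\ell$ smallest eigenvalues of $\bar{G}_{\infty}$ as zero. Partition $Y=\begin{bmatrix}Y_1\\Y_2\end{bmatrix}$ with $Y_1\in\R^{k\times\ell}$ and $Y_2\in\R^{(n+\ell-k)\times\ell}$. From $\bar{C}Y=0$ we get $\bar{C}_{21}Y_1+\bar{C}_{22}Y_2=0$, so a short calculation gives
\[
\bar{G}Y_2=\bar{C}_{22}^\top\bar{C}_{22}Y_2-\bar{C}_{22}^\top\bar{C}_{21}(\bar{C}_{21}^\top\bar{C}_{21})^{-1}\bar{C}_{21}^\top\bar{C}_{22}Y_2=\bar{C}_{22}^\top(\bar{C}_{22}Y_2+\bar{C}_{21}Y_1)=0
\]
for all sufficiently large $m$; passing to the limit yields $\bar{G}_{\infty}Y_2=0$. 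Since the last $\ell$ rows of $Y_2$ form $-I_\ell$, its columns are linearly independent and the nullity of $\bar{G}_{\infty}$ is at least $\ell$. As the limit of PSD matrices (each $\bar{G}=\bar{C}_{22}^\top P_{21}\bar{C}_{22}$ is PSD), $\bar{G}_{\infty}$ is itself PSD, so its $\ell$ smallest eigenvalues are exactly zero and those of $\bar{G}_{\infty}+\sigma^2 I$ equal $\sigma^2$. Continuity of eigenvalues under the convergence $m^{-1}G\to\bar{G}_{\infty}+\sigma^2 I$ then forces the $\ell$ smallest eigenvalues of $m^{-1}G$ to tend a.s.\ to $\sigma^2$, and the sandwich $m^{-1}\lambda_1\le m^{-1}\mu\le m^{-1}\lambda_\ell$ built into the definition of $\mu$ in \eqref{eq:defmu} gives $m^{-1}\mu\to\sigma^2$.

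For the second claim I would just assemble the blocks of $m^{-1}F$. The upper-left block is $m^{-1}\bar{C}_{21}^\top\bar{C}_{21}\to\bar{S}_{11}$ directly; the off-diagonal block $m^{-1}\bar{C}_{21}^\top\bar{C}_{22}+m^{-1}\bar{C}_{21}^\top E$ converges to $\bar{S}_{12}$ by Lemma~\ref{lem:kolmogolov}; and the lower-right block $m^{-1}C_{22}^\top C_{22}-m^{-1}\mu I$ tends to $(\bar{S}_{22}+\sigma^2 I)-\sigma^2 I=\bar{S}_{22}$ via the expansion of $C_{22}=\bar{C}_{22}+E$ together with the convergence $m^{-1}\mu\to\sigma^2$ just proved. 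The main obstacle is the structural step of pinning down the nullity of $\bar{G}_{\infty}$ via the regression identity $\bar{C}Y=0$ and the block $Y_2$; once that algebraic link to the Schur complement is in hand, the remaining work is routine a.s.\ convergence handled by Lemma~\ref{lem:kolmogolov} and the Kolmogorov SLLN.
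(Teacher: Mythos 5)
Your proposal is correct and follows essentially the same route as the paper's proof: decompose $G$ into $\bar{G}$ plus noise terms, eliminate the cross terms with Lemma~\ref{lem:kolmogolov}, obtain $m^{-1}E^{\top}E\to\sigma^2 I$ from the strong law of large numbers, invoke continuity of eigenvalues, and sandwich $\mu$ between the $\ell$ smallest eigenvalues of $G$. The only real difference is how the $\ell$-fold zero eigenvalue of $\lim_{m\to\infty}m^{-1}\bar{G}$ is identified: the paper counts ranks via the Schur-complement identity \eqref{eq:CGauss}, whereas you exhibit the null space explicitly through $\bar{G}Y_{2}=0$ derived from $\bar{C}_{21}Y_{1}+\bar{C}_{22}Y_{2}=0$ --- an equivalent and, if anything, more explicit justification of the same step.
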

\begin{proof}
Noting that $F$ is defined as in~\eqref{eq:defF},
we consider 
\begin{align}\label{eq:ClTCl}
\lim_{m\to \infty} m^{-1}{{C}_{\rm lower}}^{\top}{C}_{\rm lower}=
\lim_{m\to \infty} 
\begin{bmatrix}
m^{-1}{{C}_{21}}^{\top}{C}_{21} & m^{-1}{{C}_{21}}^{\top}{C}_{22} \\
m^{-1}{{C}_{22}}^{\top}{C}_{21} & m^{-1}{{C}_{22}}^{\top}{C}_{22}  
\end{bmatrix}.
\end{align}
For each submatrix, noting~\eqref{eq:CE}, we have
\begin{align}\label{eq:ClTCl2}
&
m^{-1}{{C}_{21}}^{\top}{C}_{21}=m^{-1}{\bar{C}_{21}}{}^{\top}\bar{C}_{21},
\quad
m^{-1}{{C}_{21}}^{\top}{C}_{22}=
m^{-1}{\bar{C}_{21}}^{\top}(\bar{C}_{22}+E)
\\
\label{eq:ClTCl3}
&
m^{-1}{{C}_{22}}^{\top}{C}_{21}=
m^{-1}({\bar{C}_{22}}^{\top}+E)\bar{C}_{21},
\quad
m^{-1}{{C}_{22}}^{\top}{C}_{22}=
m^{-1}({\bar{C}_{22}}+E)^{\top}(\bar{C}_{22}+E).
\end{align}
In the above four submatrices, the convergence of
$m^{-1}{{C}_{21}}^{\top}{C}_{21}$ is obvious.
For the other submatrices, 
the convergences can be proved as follows.

First, under Assumption~\ref{as:E},
the strong law of large numbers (SLLN) yields
\begin{align}\label{eq:ETEsigma2}
\lim_{m\to \infty}m^{-1}E^{\top}E=\sigma^2 I \ \text{with probability one}.
\end{align}

Next, we use Lemma~\ref{lem:kolmogolov} to
prove the convergence of $m^{-1}{\bar{C}_{21}}{}^{\top}E$
and $m^{-1}{\bar{C}_{22}}{}^{\top}E$.
Below we consider $m^{-1}{\bar{C}_{21}}{}^{\top}E$
because $m^{-1}{\bar{C}_{22}}{}^{\top}E$ can be analyzed in the same way. 
The columns of $\bar{C}_{21}$
correspond to $\alpha_{i}\ (i=1,\ldots)$,
and the columns of $E$ correspond to $\epsilon^{(i)}\ (i=1,\ldots )$.
Note that the sum of squares of the columns of $\bar{C}_{21}$
is the sum of the diagonal elements of $\bar{C}_{21}{}^{\top}\bar{C}_{21}$.
It is bounded due to \eqref{eq:defF} and the existence of $\lim_{m\to \infty}m^{-1}\bar{F}$
in Assumption~\ref{as:PFP}.
Hence, under Assumptions~\ref{as:E} and~\ref{as:PFP},
Lemma~\ref{lem:kolmogolov} leads to the strong convergence of 
$m^{-1}{{C}_{21}}^{\top}E$ to $0_{k\times (n+\ell -k)}$ as $m\to \infty$.
Similarly, we see the convergence of $m^{-1}{{C}_{22}}^{\top}E$.
In other words, we have
\begin{align}\label{eq:CTE}
\lim_{m\to \infty}m^{-1}{{C}_{21}}^{\top}E=0_{k\times (n+\ell -k)},
\quad
\lim_{m\to \infty}m^{-1}{{C}_{22}}^{\top}E=0_{(n+\ell -k)\times (n+\ell -k)},
\end{align}
with probability one.
Thus, from~\eqref{eq:ClTCl}, \eqref{eq:ClTCl2}, \eqref{eq:ClTCl3}, \eqref{eq:ETEsigma2}, and \eqref{eq:CTE}, 
we have
\begin{align}\label{eq:ClTClsigma2}
\lim_{m\to \infty} m^{-1}{{C}_{\rm lower}}^{\top}{C}_{\rm lower}=
\lim_{m\to \infty} \begin{bmatrix}
m^{-1}{{C}_{21}}^{\top}{C}_{21} & m^{-1}{{C}_{21}}^{\top}{C}_{22} \\
m^{-1}{{C}_{22}}^{\top}{C}_{21} & m^{-1}{{C}_{22}}^{\top}{C}_{22}  
\end{bmatrix}
=
\lim_{m\to \infty} \begin{bmatrix}
m^{-1}{\bar{C}_{21}}^{\top}\bar{C}_{21} & m^{-1}{\bar{C}_{21}}^{\top}\bar{C}_{22} \\
m^{-1}{\bar{C}_{22}}^{\top}\bar{C}_{21} & m^{-1}{\bar{C}_{22}}^{\top}\bar{C}_{22}+\sigma^{2}I  
\end{bmatrix}
\end{align}
with probability one.

In the following, we consider the convergence of
$m^{-1}G$ defined as in \eqref{eq:G}. Obviously, from~\eqref{eq:CGauss},
\begin{align*}
{\rm rank}(m^{-1}G)
=
{\rm rank}(m^{-1}{{C}_{\rm lower}}^{\top}{C}_{\rm lower})-k
\le n-k.
\end{align*}
For the right side on the equation \eqref{eq:G}, using
\begin{align*}
m^{-1}{{C}_{22}}^{\top}{C_{21}}({C_{21}}^{\top}{C_{21}})^{-1}{C_{21}}^{\top}{C}_{22}
=
(m^{-1}{{C}_{22}}^{\top}{C_{21}})(m^{-1}{C_{21}}^{\top}{C_{21}})^{-1}(m^{-1}{C_{21}}^{\top}{C}_{22}),
\end{align*}
we have
\begin{align*}
\lim_{m\to \infty}m^{-1}G
&=\lim_{m\to \infty}m^{-1}{C_{22}}^{\top}{C_{22}}
-(\lim_{m\to \infty}m^{-1}{{C}_{22}}^{\top}{C_{21}})(\lim_{m\to \infty}m^{-1}{C_{21}}^{\top}{C_{21}})^{-1}(\lim_{m\to \infty}m^{-1}{C_{21}}^{\top}{C}_{22})
\\
&=\lim_{m\to \infty}m^{-1}{\bar{C}_{22}}^{\top}{\bar{C}_{22}}+\sigma^2 I
-(\lim_{m\to \infty}m^{-1}{\bar{C}_{22}}{}^{\top}{\bar{C}_{21}})(\lim_{m\to \infty}m^{-1}{\bar{C}_{21}}{}^{\top}{\bar{C}_{21}})^{-1}(\lim_{m\to \infty}m^{-1}{\bar{C}_{21}{}}^{\top}\bar{C}_{22})
\\
&=\lim_{m\to \infty}m^{-1}\bar{G}+\sigma^2 I
\end{align*}
with probability one from \eqref{eq:barG} and \eqref{eq:ClTClsigma2}.
Since $\lambda_{i}\  (i=1,\ldots ,\ell)$ are the smallest eigenvalues of $G$,
we have
\begin{align*}
\lim_{m\to \infty}m^{-1}\lambda_{i}=\sigma^2 \ \text{with probability one for} \ i=1,\ldots ,\ell.
\end{align*}
Noting that $\mu$ is defined as in \eqref{eq:defmu},
we have
\begin{align*}
\lim_{m\to \infty}m^{-1}\mu=\sigma^2 \ \text{with probability one}.
\end{align*}
Combined with \eqref{eq:ClTClsigma2} 
we obtain \eqref{eq:sigmaF}.
\end{proof}

\begin{remark}
The above lemma is implied in~\cite[\S~3]{Huffel1989}.
The relevance to the Cholesky QR is implicitly used in~\cite[Theorem~2]{Huffel1989}.
However, the discussion there is analysis of expected valuese
basically in the finite dimension $m$.
Although \cite[\S~3]{Huffel1991} states that the CTLS solution
has the strong consistency,
the asymptotic analysis as $m\to \infty$ is not explicitly written
in any literature as far as the author knows.

Below we explain the known result in the existing study
more specifically.
The convergence of $m^{-1}{C}_{22}{}^{\top}{C}_{22}$
is rigorously proved in \cite[Lemma~3.1]{Gleser1981}.
However, the convergence behaviors of
$m^{-1}{C}_{21}{}^{\top}{C}_{22}$
and $m^{-1}{C}_{22}{}^{\top}{C}_{21}$
are not clear
because the assumption is not adapted to the regression model in this paper.
To complete the proof of strong convergence,
we rigorously prove Lemma~\ref{lem:kolmogolov} and use it in the following discussions.
\end{remark}

In the following,
we construct an estimator of $X$
with the use of Lemma~\ref{lem:F}.
First, we solve the eigenvalue problem of $F$
by the Rayleigh-Ritz procedure
using $P$. In other words, we compute
\begin{align}\nonumber
\widetilde{F}=P^{\top}FP
\in \R^{(n+\ell -j)\times (n+\ell -j)},
\end{align}
and then solve the eigenvalue problem:
\begin{align}\nonumber
\widetilde{F}\widetilde{\vec{z}}_{i}=\widetilde{\lambda}_{i}\widetilde{\vec{z}}_{i}\ (i=1,\ldots ,n+\ell-j),\quad
(0\le )\widetilde{\lambda}_{1}\le \cdots \le \widetilde{\lambda}_{n+\ell-j}.
\end{align}
Next, letting
\begin{align}\nonumber
\widetilde{\Lambda}_{\ell}={\rm diag}(\widetilde{\lambda}_{1}, \ldots , \widetilde{\lambda}_{\ell}),\quad
\widetilde{Z}_{\ell}=[\widetilde{\vec{z}}_{1},\ldots ,\widetilde{\vec{z}}_{\ell}],
\end{align}
we compute $Z_{\ell}$ defined as 
\begin{align}\nonumber
Z_{\ell}=P\widetilde{Z}_{\ell}\in \R^{(n+\ell)\times \ell}.
\end{align}
Let $Z_{\ell}$ be divided into
\begin{align}\nonumber
Z_{\ell}=:
\begin{bmatrix}
Z_{\ell,{\rm upper}}
\\
Z_{\ell,{\rm lower}}
\end{bmatrix},\ 
Z_{\ell,{\rm upper}}\in \R^{n\times \ell},\
Z_{\ell,{\rm lower}}\in \R^{\ell \times \ell}.
\end{align}
Then an estimator $\widehat{X}$ is given by
\begin{align}\label{eq:hatX}
\widehat{X}=-Z_{\ell,{\rm upper}}{Z_{\ell,{\rm lower}}}^{-1}.
\end{align}

The estimator $\widehat{X}$ constructed in this way has strong consistency, as in the following theorem.

\begin{theorem}[Strong consistency]\label{thm:main}
Under Assumptions~\ref{as:E} and~\ref{as:PFP},
we have
\begin{align}\label{eq:wXtoX}
\lim_{m\to \infty}\widehat{X}={X} \ \text{with probability one},
\end{align}
where $\widehat{X}$ is given by~\eqref{eq:hatX}.
\end{theorem}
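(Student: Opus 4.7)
The strategy is to lift the strong convergence of $m^{-1}F$ to convergence of the Rayleigh--Ritz reduced matrix $m^{-1}\widetilde{F}$, identify the null space of its limit explicitly in terms of $Y$, and then transfer this spectral information back to $\widehat{X}$ via standard invariant subspace perturbation arguments.

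First I would apply Lemma~\ref{lem:F} componentwise: since $P$ is a fixed (deterministic) matrix with orthonormal columns, the strong consistency $m^{-1}F \to \bar{S}$ immediately yields
\begin{align*}
\lim_{m\to\infty} m^{-1}\widetilde{F} = \lim_{m\to\infty} P^{\top}(m^{-1}F)P = P^{\top}\bar{S}P =: \bar{T} \quad \text{with probability one}.
\end{align*}
The crucial observation is that $\bar{T}$ has an exactly computable null space containing the target parameter. Indeed, since $\bar{C}_{\rm upper}Y = 0$ (from $\bar{A}X = \bar{B}$) and the columns of $P$ form an orthonormal basis of $\ker(\bar{C}_{\rm upper})$, we have $Y = PW$ for $W := P^{\top}Y \in \R^{(n+\ell-j)\times \ell}$, and $W$ inherits full column rank $\ell$ from $Y$. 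Moreover, $\bar{C}_{\rm lower}Y = 0$ implies $\bar{F}Y = 0$ for every $m$, hence $\bar{S}Y = 0$ and so $\bar{T}W = P^{\top}\bar{S}PW = 0$. By Assumption~\ref{as:PFP}, $\mathrm{rank}(\bar{T}) = n-j$, so $\dim\ker(\bar{T}) = (n+\ell-j)-(n-j) = \ell$, which forces $\ker(\bar{T}) = \mathrm{range}(W)$ exactly.

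Next I would invoke spectral gap / invariant subspace continuity. The limit $\bar{T}$ has its $\ell$ smallest eigenvalues equal to zero and its $(\ell+1)$-st eigenvalue strictly positive (again by $\mathrm{rank}(\bar{T}) = n-j$). By continuity of eigenvalues (Weyl's inequality) applied to $m^{-1}\widetilde{F}\to\bar{T}$, the smallest $\ell$ eigenvalues $m^{-1}\widetilde{\lambda}_1,\dots,m^{-1}\widetilde{\lambda}_\ell$ converge to $0$ almost surely, and for all sufficiently large $m$ a uniform spectral gap separates them from $m^{-1}\widetilde{\lambda}_{\ell+1}$. Standard invariant subspace perturbation (e.g.\ the Davis--Kahan $\sin\Theta$ theorem, or simply continuity of the spectral projector) then yields that the orthogonal projector $\widetilde{Z}_\ell \widetilde{Z}_\ell^{\top}$ onto $\mathrm{range}(\widetilde{Z}_\ell)$ converges almost surely to the orthogonal projector onto $\mathrm{range}(W)$. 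Multiplying by $P$ on the left and $P^{\top}$ on the right, the projector onto $\mathrm{range}(Z_\ell) = P\,\mathrm{range}(\widetilde{Z}_\ell)$ converges to the projector onto $\mathrm{range}(PW) = \mathrm{range}(Y)$.

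From this subspace convergence I would conclude as follows. Set $K_m := Y^{\dagger} Z_\ell = (Y^{\top}Y)^{-1}Y^{\top}Z_\ell$; then $YK_m$ is the orthogonal projection of $Z_\ell$ onto $\mathrm{range}(Y)$, so $Z_\ell - YK_m \to 0$ almost surely by the convergence of projectors, and $K_m$ converges to an invertible $\ell\times\ell$ matrix because both $\mathrm{range}(Y)$ and $\mathrm{range}(Z_\ell)$ are $\ell$-dimensional in the limit. Writing
\begin{align*}
\begin{bmatrix} Z_{\ell,\rm upper} \\ Z_{\ell,\rm lower} \end{bmatrix}
= \begin{bmatrix} X \\ -I \end{bmatrix} K_m + o(1)
= \begin{bmatrix} XK_m + o(1) \\ -K_m + o(1) \end{bmatrix},
\end{align*}
the lower block is invertible for all sufficiently large $m$, and
\begin{align*}
\widehat{X} = -Z_{\ell,\rm upper}Z_{\ell,\rm lower}^{-1} = -(XK_m+o(1))(-K_m+o(1))^{-1} \longrightarrow X
\end{align*}
with probability one, which is \eqref{eq:wXtoX}.

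The main obstacle, and the step requiring the most care, is the transfer from convergence of the matrix $m^{-1}\widetilde{F}$ to convergence of a specific invariant subspace. The smallest $\ell$ eigenvalues of $\bar{T}$ coincide (they are all zero), so individual eigenvectors are not well defined and one must work at the level of the $\ell$-dimensional invariant subspace, exploiting the spectral gap above the zero eigenvalue. Once this subspace-level convergence is established, the final algebraic step of reading off $X$ from $Z_{\ell,\rm upper}Z_{\ell,\rm lower}^{-1}$ is routine thanks to the rigid form $Y = [X;-I]$.
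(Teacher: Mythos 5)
Your proof is correct and follows essentially the same route as the paper: apply Lemma~\ref{lem:F} to get $m^{-1}\widetilde{F}\to P^{\top}\bar{S}P$, identify the $\ell$-dimensional kernel of the limit with the subspace encoding $X$ (the paper phrases this via the Ritz vectors $\bar{Z}_{\ell}$ of $\bar{S}$ and equation \eqref{eq:XwithZ}, you via $\ker(\bar{T})=\mathrm{range}(P^{\top}Y)$), and conclude by spectral continuity. Your handling of the final step is actually more careful than the paper's one-line appeal to ``the continuities of eigenvectors'': you correctly note that the $\ell$ zero eigenvalues are repeated, so one must argue at the level of the invariant subspace with a spectral gap, and your only blemish is the claim that $K_m$ ``converges'' to an invertible matrix --- it need only be bounded with bounded inverse, which your argument in fact supplies.
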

\begin{proof}
Let $\bar{Z}_{\ell}\in \R^{(n+\ell)\times \ell}$ denote the matrix
comprising the Ritz vectors of $\bar{S}$
with the use of the subspace spanned by the columns of $P$.
From \eqref{eq:defP} and Lemma~\ref{lem:assumption}, we have
\begin{align*}
{\rm rank}\left(
\begin{bmatrix}
\bar{C}_{\rm upper} \\
\bar{S}  
\end{bmatrix}\right)=n,\quad
\begin{bmatrix}
\bar{C}_{\rm upper} \\
\bar{S}  
\end{bmatrix}
\bar{Z}_{\ell}=0_{(n+\ell +j)\times \ell}.
\end{align*}
Hence, letting
\begin{align}\nonumber
\bar{Z}_{\ell}=:
\begin{bmatrix}
\bar{Z}_{\ell,{\rm upper}}
\\
\bar{Z}_{\ell,{\rm lower}}
\end{bmatrix},\ 
\bar{Z}_{\ell,{\rm upper}}\in \R^{n\times \ell},\
\bar{Z}_{\ell,{\rm lower}}\in \R^{\ell \times \ell},
\end{align}
we have
\begin{align*}
\begin{bmatrix}
\bar{C}_{\rm upper} \\
\bar{S}  
\end{bmatrix}
\begin{bmatrix}
\bar{Z}_{\ell,{\rm upper}}\bar{Z}_{\ell,{\rm lower}}{}^{-1} \\
I
\end{bmatrix}
=0_{(n+\ell +j)\times \ell}.
\end{align*}
Thus, we obtain
\begin{align}\label{eq:XwithZ}
{X}=-\bar{Z}_{\ell,{\rm upper}}{\bar{Z}_{\ell,{\rm lower}}}{}^{-1}.
\end{align}
Note that the uniqueness and existence of $X$ as in Lemma~\ref{lem:projection}
imply that $\bar{Z}_{\ell,{\rm lower}}$ is nonsingular.
Recall that the estimator $\widehat{X}$ in \eqref{eq:hatX} is
computed by $Z_{\ell}$ comprising the Ritz vectors of $F$.
Lemma~\ref{lem:F} and the continuities of eigenvectors
lead to \eqref{eq:wXtoX}.
\end{proof}

It is worth noting that we have constructed a consistent estimator $\widehat{X}$ based on the properties of orthogonal projections, which naturally proves its strong consistency. With this procedure focused on orthogonal projections in mind, the next section gives a natural proof of strong consistency for the existing TLS estimator.

\section{Consistency analysis for the CTLS solution}\label{sec:mainresult2}
In this section, we prove strong consistency of $X_{\rm ctls}$ in~\eqref{eq:ctlscr}
for the regression model as in~\eqref{eq:defbarAXbarB}, with emphasis on orthogonal projections.
To this end, we start the discussion with the following simple case.

For $\bar{A}$ in~\eqref{eq:defbarAXbarB}, if $\bar{A}_{11}$ is square and nonsingular, we have
\begin{align}\label{eq:originmodel}
\begin{bmatrix}
\bar{A}_{11} & \bar{A}_{12} \\
0_{(m-j)\times k} & \bar{A}_{22}-\bar{A}_{21}{\bar{A}_{11}}{}^{-1}\bar{A}_{12} \\
\end{bmatrix}{X}=
\begin{bmatrix}
{\bar{B}}_{1} \\
{\bar{B}}_{2}-\bar{A}_{21}{\bar{A}_{11}}{}^{-1}{\bar{B}}_{1} \\
\end{bmatrix}
\end{align}
in the same manner as \eqref{eq:ctlsreduced}.
Hence,
\begin{align*}
&
\begin{bmatrix}
{A}_{11} & {A}_{12} \\
0_{(m-j)\times k} & {A}_{22}-{A}_{21}{{A}_{11}}^{-1}{A}_{12} \\
\end{bmatrix}
=
\begin{bmatrix}
\bar{A}_{11} & \bar{A}_{12} \\
0_{(m-j)\times k} & \bar{A}_{22}-\bar{A}_{21}{\bar{A}_{11}}{}^{-1}\bar{A}_{12} +E_{A_{22}} \\
\end{bmatrix},\\
&
\begin{bmatrix}
{{B}}_{1} \\
{{B}}_{2}-{A}_{21}{{A}_{11}}^{-1}{{B}}_{1}
\end{bmatrix}
=
\begin{bmatrix}
{\bar{B}}_{1} \\
{\bar{B}}_{2}-\bar{A}_{21}{\bar{A}_{11}}{}^{-1}{\bar{B}}_{1}+E_{B_{2}} \\
\end{bmatrix}.
\end{align*}
Thus the problem is reduced to the following regression model.
Letting $\widetilde{X}$ denote the lower $(n-k)\times \ell$ submatrix of $X$, we have
\begin{align}\nonumber
(\bar{A}_{22}-\bar{A}_{21}{\bar{A}_{11}}{}^{-1}\bar{A}_{12})\widetilde{X}
=
{\bar{B}}_{2}-\bar{A}_{21}{\bar{A}_{11}}{}^{-1}{\bar{B}}_{1} 
\end{align}
from~\eqref{eq:originmodel},
where $E_{A_{22}}$ and $E_{B_{2}}$ are added to
$\bar{A}_{22}-\bar{A}_{21}{\bar{A}_{11}}{}^{-1}\bar{A}_{12}$
and
${\bar{B}}_{2}-\bar{A}_{21}{\bar{A}_{11}}{}^{-1}{\bar{B}}_{1}$,
respectively.
This feature is similar to $X_{\rm ctls}$ of~\eqref{eq:ctlscr} in Section~\ref{sec:CTLSrc}.

As in Section~\ref{sec:CTLSrc},
using the SVD of $A_{11}$, 
we have \eqref{eq:Xctlsprob} without loss of generality.
Analogously, we assume $A_{11}=0_{j\times k}$, resulting in
the following regression model:
\begin{align}\nonumber
\begin{bmatrix}
0_{j\times k} & \bar{A}_{12} \\
\bar{A}_{21} & \bar{A}_{22} \\
\end{bmatrix} {X}=
\begin{bmatrix}
{\bar{B}}_{1} \\
{\bar{B}}_{2} \\
\end{bmatrix}.
\end{align}
From the definition of $\bar{C}$ in \eqref{eq:barC2} and \eqref{eq:defbarC},
\begin{align}\label{eq:barC0}
& \bar{C}=
\begin{bmatrix}
0_{j\times k} & \bar{C}_{12} \\
\bar{C}_{21} & \bar{C}_{22} \\
\end{bmatrix}\in \R^{m\times (n+\ell)}.
\end{align}

Similarly to the previous sections, 
we assume that
$\bar{C}_{12}$ is of full row rank.
If not, we select independent rows in $\bar{C}_{12}$ to recover $X$.
Thus, without loss of generality,
$\bar{C}_{12}$ is of full row rank, i.e., 
${\rm rank}(\bar{C}_{12})=j$,
leading to $j\le n-k$.
In addition, we define $\widetilde{P}$ such that
\begin{align*}
\widetilde{P}\in \R^{(n+\ell -k)\times (n+\ell -k-j)},\quad
\widetilde{P}^{\top}\widetilde{P}=I,\quad \bar{C}_{12}\widetilde{P}=0_{j\times (n+\ell -k-j)}.
\end{align*}
Moreover, we assume the following.
\begin{assumption}\label{as:PGP}
$\lim_{m\to \infty}m^{-1}{\bar{C}_{21}}{}^{\top}[\bar{C}_{21} \ \ \bar{C}_{22}]$ exists, 
and $\lim_{m\to \infty}m^{-1}{\bar{C}_{21}}{}^{\top}\bar{C}_{21}$ is positive definite. 
In addition,
$\lim_{m\to \infty}m^{-1}{\widetilde{P}}^{\top}\bar{G}\widetilde{P}=:\bar{T}$ exists, and ${\rm rank}(\bar{T})=n-j-k$. 
\end{assumption}

For the consistency analysis in this section,
the next lemma is crucial,
corresponding to Lemma~\ref{lem:F} in the previous section,
with emphasis on orthogonal projection of $\widetilde{P}$.
Note that $G$ and $\bar{G}$ can be defined as in \eqref{eq:G} and \eqref{eq:barG}, respectively,
where $\bar{C}$ is defined in this section as in \eqref{eq:barC0}.

\begin{lemma}\label{lem:PDP}
Under Assumptions~\ref{as:E} and \ref{as:PGP},
we have 
\begin{align}\label{eq:limPGP}
\lim_{m\to \infty}m^{-1}\widetilde{P}^{\top}G\widetilde{P}=\bar{T}+\sigma^2 I \ \text{with probability one}.
\end{align}
\end{lemma}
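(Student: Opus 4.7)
The plan is to mirror the strategy of Lemma~\ref{lem:F} while carrying the projection $\widetilde{P}$ through every step, and to exploit the fact that $G$ admits the compact form $G = C_{22}^{\top} \bar{M} C_{22}$ with $\bar{M} := I - \bar{C}_{21}(\bar{C}_{21}^{\top}\bar{C}_{21})^{-1}\bar{C}_{21}^{\top}$ an orthogonal projection (note that $C_{21}=\bar{C}_{21}$ under the regression model). Substituting $C_{22}=\bar{C}_{22}+E$ and sandwiching with $\widetilde{P}$, one obtains
\begin{align*}
\widetilde{P}^{\top} G \widetilde{P}
= \widetilde{P}^{\top}\bar{G}\widetilde{P}
+ \widetilde{P}^{\top}\bar{C}_{22}^{\top}\bar{M} E \widetilde{P}
+ \widetilde{P}^{\top} E^{\top} \bar{M}\bar{C}_{22}\widetilde{P}
+ \widetilde{P}^{\top} E^{\top}\bar{M} E \widetilde{P},
\end{align*}
and by Assumption~\ref{as:PGP} the first term divided by $m$ converges to $\bar{T}$. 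It remains to show that the mixed terms vanish and that the purely stochastic term tends to $\sigma^{2}I$.

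For the mixed term $m^{-1}\widetilde{P}^{\top}\bar{C}_{22}^{\top}\bar{M} E \widetilde{P}$, I would view its entries as inner products of columns of $\bar{M}\bar{C}_{22}\widetilde{P}$ against columns of $E\widetilde{P}$ and apply Lemma~\ref{lem:kolmogolov} entrywise. The coefficient-boundedness hypothesis of Lemma~\ref{lem:kolmogolov}---that $m^{-1}$ times the squared norm of each column of $\bar{M}\bar{C}_{22}\widetilde{P}$ stays bounded---follows from the idempotency $\bar{M}^{2}=\bar{M}$, since $\widetilde{P}^{\top}\bar{C}_{22}^{\top}\bar{M}^{2}\bar{C}_{22}\widetilde{P}=\widetilde{P}^{\top}\bar{G}\widetilde{P}$, whose diagonal divided by $m$ converges to that of $\bar{T}$. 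Lemma~\ref{lem:kolmogolov} then yields $m^{-1}\widetilde{P}^{\top}\bar{C}_{22}^{\top}\bar{M} E \widetilde{P}\to 0$ with probability one, and the symmetric partner is handled by transposition.

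For the remaining term $m^{-1}\widetilde{P}^{\top} E^{\top}\bar{M} E \widetilde{P}$, I would split $\bar{M}= I-\bar{C}_{21}(\bar{C}_{21}^{\top}\bar{C}_{21})^{-1}\bar{C}_{21}^{\top}$. The identity piece satisfies $m^{-1}\widetilde{P}^{\top} E^{\top} E \widetilde{P}\to\sigma^{2}\widetilde{P}^{\top}\widetilde{P}=\sigma^{2}I$ by the strong law of large numbers under Assumption~\ref{as:E}. The other piece factors as $(m^{-1}\widetilde{P}^{\top} E^{\top}\bar{C}_{21})(m^{-1}\bar{C}_{21}^{\top}\bar{C}_{21})^{-1}(m^{-1}\bar{C}_{21}^{\top} E \widetilde{P})$, whose outer factors vanish with probability one by Lemma~\ref{lem:kolmogolov} (the columns of $\bar{C}_{21}$ have squared norms bounded per $m$ by Assumption~\ref{as:PGP}) while the middle factor has a finite positive-definite limit. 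Summing the four contributions yields $\bar{T}+\sigma^{2}I$. The main obstacle---and the reason this is not an immediate corollary of Lemma~\ref{lem:F}---is that Assumption~\ref{as:PGP} is strictly weaker than Assumption~\ref{as:PFP}: in particular $m^{-1}\bar{C}_{22}^{\top}\bar{C}_{22}$ need not converge, so every invocation of Lemma~\ref{lem:kolmogolov} must be mediated through the projections $\widetilde{P}$ and $\bar{M}$, rather than through boundedness of the raw columns of $\bar{C}_{22}$.
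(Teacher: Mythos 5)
Your overall route is sound and is essentially the paper's argument reorganized around the projector $\bar{M}=I-\bar{C}_{21}(\bar{C}_{21}^{\top}\bar{C}_{21})^{-1}\bar{C}_{21}^{\top}$: the same three ingredients appear (the SLLN for $m^{-1}\widetilde{P}^{\top}E^{\top}E\widetilde{P}$, Lemma~\ref{lem:kolmogolov} for the cross terms, and the convergence of $m^{-1}\bar{C}_{21}^{\top}[\bar{C}_{21}\ \ \bar{C}_{22}]$ from Assumption~\ref{as:PGP}), and your treatment of $m^{-1}\widetilde{P}^{\top}E^{\top}\bar{M}E\widetilde{P}$ is correct as written. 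However, there is one genuine gap: in the mixed term you apply Lemma~\ref{lem:kolmogolov} with the coefficient sequence taken to be a column of $\bar{M}\bar{C}_{22}\widetilde{P}$. Since $\bar{M}$ is an $(m-j)\times(m-j)$ matrix built from all $m-j$ rows of $\bar{C}_{21}$, the $i$-th entry of such a column changes as $m$ grows; you are dealing with a triangular array, not the fixed sequence $\{\alpha_i\}$ that Lemma~\ref{lem:kolmogolov} requires. The lemma (a Kolmogorov-type SLLN via \cite[Theorem~5.4.1]{Chung2001}) does not transfer to $m$-dependent coefficients without further argument, so the step ``Lemma~\ref{lem:kolmogolov} then yields $m^{-1}\widetilde{P}^{\top}\bar{C}_{22}^{\top}\bar{M}E\widetilde{P}\to 0$'' is not justified as stated, even though your idempotency observation correctly supplies the boundedness hypothesis.

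The fix is to expand $\bar{M}$ inside the mixed term before invoking the lemma, which is what the paper does: write $\widetilde{P}^{\top}\bar{C}_{22}^{\top}\bar{M}E\widetilde{P}=\widetilde{P}^{\top}\bar{C}_{22}^{\top}E\widetilde{P}-(\widetilde{P}^{\top}\bar{C}_{22}^{\top}\bar{C}_{21})(\bar{C}_{21}^{\top}\bar{C}_{21})^{-1}(\bar{C}_{21}^{\top}E\widetilde{P})$. The second piece vanishes after scaling by $m^{-1}$ because $m^{-1}\bar{C}_{21}^{\top}E\to 0$ (here the columns of $\bar{C}_{21}$ \emph{are} fixed sequences, so Lemma~\ref{lem:kolmogolov} applies) while the other two factors converge by Assumption~\ref{as:PGP}. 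For the first piece you need $m^{-1}$ times the squared column norms of $\bar{C}_{22}\widetilde{P}$ (again a fixed sequence of rows) to be bounded; this follows because $\lim_{m\to\infty}m^{-1}\widetilde{P}^{\top}\bar{C}_{22}^{\top}\bar{C}_{22}\widetilde{P}$ exists, which is exactly the intermediate fact the paper extracts from Assumption~\ref{as:PGP} by adding back the (convergent) correction term to $\bar{T}$. Your closing remark that the argument must be ``mediated through $\bar{M}$'' is therefore slightly off: the projector $\widetilde{P}$ is fixed and harmless, but $\bar{M}$ must be eliminated before Lemma~\ref{lem:kolmogolov} can be used.
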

\begin{proof}
First, noting $\bar{G}$ in \eqref{eq:barG}, we have
\begin{align}\label{eq:PbarGP}
m^{-1}\widetilde{P}^{\top}\bar{G}\widetilde{P}=
m^{-1}\widetilde{P}^{\top}{\bar{C}_{22}}{}^{\top}\bar{C}_{22}\widetilde{P}
-m^{-1}\widetilde{P}^{\top}{\bar{C}_{22}}{}^{\top}{\bar{C}_{21}}(\bar{C}_{21}{}^{\top}{\bar{C}_{21}})^{-1}\bar{C}_{21}{}^{\top}\bar{C}_{22}\widetilde{P}.
\end{align}
The second term on the right side of the equaiton above is
\begin{align*}
(m^{-1}\widetilde{P}^{\top}{\bar{C}_{22}}{}^{\top}{\bar{C}_{21}})(m^{-1}\bar{C}_{21}{}^{\top}{\bar{C}_{21}})^{-1}(m^{-1}\bar{C}_{21}{}^{\top}\bar{C}_{22}\widetilde{P}).
\end{align*}
Since $\lim_{m\to \infty}m^{-1}{\bar{C}_{21}}{}^{\top}[\bar{C}_{21} \ \ \bar{C}_{22}]$ exists in Assumption~\ref{as:PGP},
\begin{align}\label{eq:PCC}
\lim_{m\to \infty}m^{-1}\widetilde{P}^{\top}{\bar{C}_{22}}{}^{\top}{\bar{C}_{21}}(\bar{C}_{21}{}^{\top}{\bar{C}_{21}})^{-1}\bar{C}_{21}{}^{\top}\bar{C}_{22}\widetilde{P}
\end{align}
exists.
In addition, 
$\lim_{m\to \infty}m^{-1}{\widetilde{P}}^{\top}\bar{G}\widetilde{P}=:\bar{T}$ exists
in Assumption~\ref{as:PGP},
and thus
\begin{align}\label{eq:PCTCP}
\lim_{m\to \infty}m^{-1}\widetilde{P}^{\top}{\bar{C}_{22}}{}^{\top}\bar{C}_{22}\widetilde{P} 
\end{align}
exists from \eqref{eq:PbarGP}.

Next, regarding ${G}$ in \eqref{eq:G},
\begin{align}
\nonumber
m^{-1}\widetilde{P}^{\top}{G}\widetilde{P} &=
m^{-1}\widetilde{P}^{\top}{{C}_{22}}{}^{\top}{C}_{22}\widetilde{P}
-m^{-1}\widetilde{P}^{\top}{{C}_{22}}{}^{\top}{{C}_{21}}({C}_{21}{}^{\top}{{C}_{21}})^{-1}{C}_{21}{}^{\top}{C}_{22}\widetilde{P}
\\
&=
m^{-1}\widetilde{P}^{\top}({\bar{C}_{22}}+E){}^{\top}(\bar{C}_{22}+E)\widetilde{P}
-m^{-1}\widetilde{P}^{\top}({\bar{C}_{22}}+E){}^{\top}{{C}_{21}}({C}_{21}{}^{\top}{{C}_{21}})^{-1}{C}_{21}{}^{\top}({\bar{C}_{22}}+E)\widetilde{P}.
\label{eq:PGP}
\end{align}
For the second term on the right side of the equation above, we have
\begin{align}\label{eq:EC21}
\lim_{m\to \infty}m^{-1}E^{\top}\bar{C}_{21}=0_{(n+\ell -k)\times k} \ \text{with probability one}
\end{align}
from Lemma~\ref{lem:kolmogolov} and the convergence of $m^{-1}\bar{C}_{21}{}^{\top}\bar{C}_{21}$
in Assumption~\ref{as:PGP}.
Thus, 
\begin{align}
&
\lim_{m\to \infty}m^{-1}\widetilde{P}^{\top}({\bar{C}_{22}}+E){}^{\top}{{C}_{21}}({C}_{21}{}^{\top}{{C}_{21}})^{-1}{C}_{21}{}^{\top}({\bar{C}_{22}}+E)\widetilde{P}
\nonumber \\
&=
\lim_{m\to \infty}(m^{-1}\widetilde{P}^{\top}({\bar{C}_{22}}+E){}^{\top}{{C}_{21}})(m^{-1}{C}_{21}{}^{\top}{{C}_{21}})^{-1})
(m^{-1}{C}_{21}{}^{\top}({\bar{C}_{22}}+E)\widetilde{P})
\nonumber \\
&=
\lim_{m\to \infty}(m^{-1}\widetilde{P}^{\top}{\bar{C}_{22}}{}^{\top}{{C}_{21}})(m^{-1}{C}_{21}{}^{\top}{{C}_{21}})^{-1}
(m^{-1}{C}_{21}{}^{\top}{\bar{C}_{22}}\widetilde{P})
\nonumber \\
&=
\lim_{m\to \infty}m^{-1}\widetilde{P}^{\top}{\bar{C}_{22}}{}^{\top}{C}_{21}({C}_{21}{}^{\top}{{C}_{21}})^{-1}
{C}_{21}{}^{\top}{\bar{C}_{22}}\widetilde{P}.
\label{eq:PGP2}
\end{align}

For the first term on the right side of the equation \eqref{eq:PGP},
\begin{align}\label{eq:PGP3}
m^{-1}\widetilde{P}^{\top}({\bar{C}_{22}}+E){}^{\top}(\bar{C}_{22}+E)\widetilde{P}
=
m^{-1}\widetilde{P}^{\top}{\bar{C}_{22}}{}^{\top}\bar{C}_{22}\widetilde{P}
+
m^{-1}\widetilde{P}^{\top}{\bar{C}_{22}}{}^{\top}E\widetilde{P}
+
m^{-1}\widetilde{P}^{\top}E{}^{\top}\bar{C}_{22}\widetilde{P}
+
m^{-1}\widetilde{P}^{\top}E{}^{\top}E\widetilde{P}.
\end{align}
Below we estimate each term on the right side of the above equation.

The strong law of large numbers yields
\begin{align}\label{eq:PETEP}
\lim_{m\to \infty}m^{-1}\widetilde{P}^{\top}E{}^{\top}E\widetilde{P}
=
\widetilde{P}^{\top}(\lim_{m\to \infty}m^{-1}E{}^{\top}E)\widetilde{P}
=
\sigma^2 I \ \text{with probability one}.
\end{align}
In addition, Lemma~\ref{lem:kolmogolov}
leads to
\begin{align*}
\lim_{m\to \infty}m^{-1}E{}^{\top}(\bar{C}_{22}\widetilde{P})=0_{(n+\ell -k)\times (n+\ell -k-j)}
\ \text{with probability one}
\end{align*}
from the diagonal elements of the matrix in \eqref{eq:PCTCP}.
Noting $m^{-1}\widetilde{P}^{\top}E{}^{\top}\bar{C}_{22}\widetilde{P}=
\widetilde{P}^{\top}m^{-1}E{}^{\top}(\bar{C}_{22}\widetilde{P})$,
we have
\begin{align}\label{eq:PECP}
\lim_{m\to \infty}m^{-1}\widetilde{P}^{\top}E{}^{\top}\bar{C}_{22}\widetilde{P}
=0_{(n+\ell -k-j)\times (n+\ell -k-j)} \ \text{with probability one}
\end{align}
Thus, \eqref{eq:PGP3}, \eqref{eq:PETEP} and \eqref{eq:PECP} yield
\begin{align}\label{eq:PGP1}
\lim_{m\to \infty}m^{-1}\widetilde{P}^{\top}({\bar{C}_{22}}+E){}^{\top}(\bar{C}_{22}+E)\widetilde{P}
=
\lim_{m\to \infty}m^{-1}\widetilde{P}^{\top}{\bar{C}_{22}}{}^{\top}\bar{C}_{22}\widetilde{P}
+\sigma^2 I,
\end{align}
where $\lim_{m\to \infty}m^{-1}\widetilde{P}^{\top}{\bar{C}_{22}}{}^{\top}\bar{C}_{22}\widetilde{P}$
exists as in \eqref{eq:PCTCP}.
From \eqref{eq:PbarGP}, \eqref{eq:PGP}, \eqref{eq:PGP2}, and \eqref{eq:PGP1},  
we obtain \eqref{eq:limPGP} under Assumptions~\ref{as:E} and \ref{as:PGP}.
\end{proof}

In the following, we prove the strong consistency
of $X_{\rm ctls}$. To this end,
let $\widetilde{\vec{z}}_{1},\ldots , \widetilde{\vec{z}}_{\ell}$
denote the Ritz vectors of $G$ corresponding to the $\ell$ smallest Ritz values,
where the subspace is spanned by the column vectors of $\widetilde{P}$.
In addition, let
$\widetilde{Z}:=[\widetilde{\vec{z}}_{1},\ldots , \widetilde{\vec{z}}_{\ell}]$.
Moreover, let $\widetilde{X}_{\rm ctls}$ denote the lower $(n-k)\times \ell$ submatrix of ${X}_{\rm ctls}$.
Then, analogously to \eqref{eq:XwithZ}, $\widetilde{Z}$ and $\widetilde{X}_{\rm ctls}$
have the following relation
\begin{align}\label{eq:ZtowXctls}
\widetilde{Z}=
\begin{bmatrix}
\widetilde{Z}_{\rm upper} \\
\widetilde{Z}_{\rm lower} 
\end{bmatrix},\quad
\widetilde{X}_{\rm ctls}=-\widetilde{Z}_{\rm upper}{\widetilde{Z}_{\rm lower}}{}^{-1}
\end{align}
from~\cite[\S~2]{Demmel1987}.
The following is a more detailed explanation.

Define $\widetilde{P}_{0}\in \R^{(n+\ell -k)\times j}$ such that 
its columns vectors comprise an orthonormal basis vectors
of the range of ${C}_{12}{}^{\top}(=\bar{C}_{12}{}^{\top})$.
Then $[\widetilde{P}_{0} \ \ \widetilde{P}]$ is an orthogonal matrix.
In addition, note the orthogonal matrix $Q$ in \eqref{eq:C21QR}.
Since we have the following orthogonal transformation of $C$:
\begin{align*}
\begin{bmatrix}
I & 0_{j\times (m-j)} \\
0_{(m-j)\times j} & Q^{\top} 
\end{bmatrix}
C
\begin{bmatrix}
I & 0_{k\times j} & 0_{k\times (n+\ell -k-j)} \\
0_{(n+\ell -k)\times k} & \widetilde{P}_{0} & \widetilde{P} 
\end{bmatrix}
=
\begin{bmatrix}
0_{j\times k} & C_{12}\widetilde{P}_{0} &  0_{j\times (n+\ell -k-j)} \\
Q_{1}{}^{\top}C_{21} & Q_{1}{}^{\top}C_{22}\widetilde{P}_{0}  & Q_{1}{}^{\top}C_{22}\widetilde{P} \\
0_{(m-j-k)\times k} &  Q_{2}{}^{\top}C_{22}\widetilde{P}_{0} & Q_{2}{}^{\top}C_{22}\widetilde{P}
\end{bmatrix},
\end{align*}
the low rank approximation of $Q_{2}{}^{\top}C_{22}\widetilde{P}$,
i.e., the lower right submatrix,
leads to the CTLS solution~\cite[Theorem~3]{Demmel1987}.

Since we can compute $\widetilde{X}_{\rm ctls}$,
the upper $k\times \ell$ part of $X_{\rm ctls}$ can be obtained by
\begin{align}\label{eq:C21X}
\begin{bmatrix}
C_{21}{}^{\top}C_{21} &  C_{21}{}^{\top}C_{22}
\end{bmatrix}
\begin{bmatrix}
X_{\rm ctls} \\
-I
\end{bmatrix}
=0_{(n+\ell)\times \ell}.
\end{align}
See~\cite[\S~2]{Demmel1987} for the details of the computation of the CTLS solution.
In \eqref{eq:C21X}, for the regularity of $C_{21}{}^{\top}C_{21}$,
$\lim_{m\to \infty}m^{-1}C_{21}{}^{\top}C_{21}$ is positive definite 
in the asymptotic regime under Assumption~\ref{as:PGP}.

\begin{theorem}[Strong consistency]\label{thm:main2}
Under Assumptions~\ref{as:E} and~\ref{as:PGP},
we have
\begin{align}\label{eq:XctlsToX}
\lim_{m\to \infty}{X}_{\rm ctls}={X} \ \text{with probability one},
\end{align}
where ${X}_{\rm ctls}$ is the solution vector of 
the constrained total least squares problem.
\end{theorem}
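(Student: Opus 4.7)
The plan is to mirror the orthogonal-projection argument used in the proof of Theorem~\ref{thm:main}, substituting Lemma~\ref{lem:PDP} for Lemma~\ref{lem:F} and then lifting the lower block $\widetilde{X}_{\rm ctls}$ back to the full $X_{\rm ctls}$ through equation~\eqref{eq:C21X}. First I would invoke the preconditioning from Section~\ref{sec:CTLSrc} to assume without loss of generality that $A_{11}=0_{j\times k}$, so that $\bar{C}$ takes the form~\eqref{eq:barC0}; the lower block $\widetilde{X}_{\rm ctls}$ is then produced by the Ritz procedure on $G$ with subspace spanned by $\widetilde{P}$, as recorded in~\eqref{eq:ZtowXctls}.

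Next I would apply Lemma~\ref{lem:PDP} to obtain $m^{-1}\widetilde{P}^{\top}G\widetilde{P}\to \bar{T}+\sigma^{2}I$ a.s. Under Assumption~\ref{as:PGP}, $\bar{T}$ is $(n+\ell-k-j)\times(n+\ell-k-j)$ with rank $n-k-j$, so $\ker\bar{T}$ has dimension exactly $\ell$ and $\sigma^{2}$ is the $\ell$-fold smallest eigenvalue of the limit, strictly separated from the rest. Standard invariant-subspace perturbation (Davis--Kahan style) then gives a.s. convergence of the column span of the Ritz-vector matrix $\widetilde{Z}$ to the $\sigma^{2}$-eigenspace of $\bar{T}+\sigma^{2}I$, namely $\ker\bar{T}$.

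The key identification step is to show that $\ker\bar{T}$ equals the range of $W:=\widetilde{P}^{\top}\widetilde{Y}$, where $\widetilde{Y}:=[\widetilde{X};-I]$ and $\widetilde{X}$ is the lower $(n-k)\times\ell$ block of the true $X$. The upper block of $\bar{C}Y=0$ reads $\bar{C}_{12}\widetilde{Y}=0$ (since $\bar{A}_{11}=0$ and $\bar{A}_{12}\widetilde{X}=\bar{B}_{1}$), so $\widetilde{Y}\in\mathrm{range}(\widetilde{P})$ and $\widetilde{P}W=\widetilde{Y}$. The lower block $\bar{C}_{21}X_{\mathrm{upper}}+\bar{C}_{22}\widetilde{Y}=0$ shows $\bar{C}_{22}\widetilde{Y}\in\mathrm{range}(\bar{C}_{21})$, hence $\bar{G}\widetilde{Y}=0$ by the definition~\eqref{eq:barG}. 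Left-multiplying by $m^{-1}\widetilde{P}^{\top}$ and passing to the limit gives $\bar{T}W=0$; full column rank of $W$ (inherited from $\widetilde{Y}$) together with $\dim\ker\bar{T}=\ell$ forces equality of spans.

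Finally I would conclude in the style of Theorem~\ref{thm:main}: subspace convergence yields $\widetilde{Z}=\widetilde{Y}M_{m}+o(1)$ a.s. for some a.s. eventually nonsingular $\ell\times\ell$ matrix $M_{m}$, and since $-\widetilde{Z}_{\mathrm{upper}}\widetilde{Z}_{\mathrm{lower}}^{-1}$ is invariant under right multiplication by an invertible matrix, $\widetilde{X}_{\mathrm{ctls}}\to\widetilde{X}$ a.s. For the upper $k\times\ell$ block I would rearrange~\eqref{eq:C21X} as $X_{\mathrm{ctls,upper}}=-(C_{21}^{\top}C_{21})^{-1}C_{21}^{\top}C_{22}\,[\widetilde{X}_{\mathrm{ctls}};-I]$; Assumption~\ref{as:PGP}, Lemma~\ref{lem:kolmogolov}, and the limit just obtained show it converges a.s. to the analogous noise-free expression, which equals $X_{\mathrm{upper}}$ via the lower block of $\bar{C}Y=0$. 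Combining both blocks delivers~\eqref{eq:XctlsToX}. The main obstacle will be the careful treatment of the multiplicity-$\ell$ cluster at $\sigma^{2}$: individual Ritz vectors need not converge, so the entire argument must live at the subspace level and be transported through the right-invariant quotient $-\widetilde{Z}_{\mathrm{upper}}\widetilde{Z}_{\mathrm{lower}}^{-1}$; one also has to verify a.s. eventual nonsingularity of $\widetilde{Z}_{\mathrm{lower}}$, which will follow from the $-I$ block inside $\widetilde{Y}$ together with boundedness of $M_{m}^{-1}$ extracted from the orthonormality $\widetilde{Z}^{\top}\widetilde{Z}=I$.
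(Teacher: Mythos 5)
Your proposal follows the same route as the paper's own proof: reduce to $A_{11}=0_{j\times k}$, invoke Lemma~\ref{lem:PDP} for the convergence of $m^{-1}\widetilde{P}^{\top}G\widetilde{P}$ and hence of the Ritz subspace yielding $\widetilde{X}_{\rm ctls}\to\widetilde{X}$, and recover the upper $k\times\ell$ block from~\eqref{eq:C21X} using the consistency of $m^{-1}C_{21}^{\top}C_{22}$. It is correct, and in fact supplies details the paper leaves implicit (the identification of $\ker\bar{T}$ with the range of $\widetilde{P}^{\top}\widetilde{Y}$ via $\bar{G}\widetilde{Y}=0$, the subspace-level treatment of the multiplicity-$\ell$ eigenvalue cluster at $\sigma^{2}$, and the invariance of $-\widetilde{Z}_{\rm upper}\widetilde{Z}_{\rm lower}^{-1}$ under right multiplication by a nonsingular matrix).
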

\begin{proof}
Let $\widetilde{X}$ denote the lower $(n-k)\times \ell$ submatrix of $X$.
Then $\widetilde{X}$ have the relation to the Ritz vectors of $\bar{G}$
in the same manner as $\widetilde{X}_{\rm ctls}$ in \eqref{eq:ZtowXctls}.
Note that, from the convergence feature as in Lemma~\ref{lem:PDP}, as $m\to \infty$,
the accumulation points of the Ritz vectors of $\bar{G}$ are those of $G$ with probability one.
Thus, we have
\begin{align*}
\lim_{m\to \infty}\widetilde{X}_{\rm ctls}=\widetilde{X} \ \text{with probability one}.
\end{align*}

Finally, we prove convergence of the upper $k\times \ell$ matrix of ${X}_{\rm ctls}$ below.
For \eqref{eq:C21X},
recall $C_{21}=\bar{C}_{21}$ from the definition.
In addition, from \eqref{eq:EC21}, we have
\begin{align*}
\lim_{m\to \infty}m^{-1}C_{21}{}^{\top}C_{22}=
\lim_{m\to \infty}m^{-1}\bar{C}_{21}{}^{\top}(\bar{C}_{22}+E)=\lim_{m\to \infty}m^{-1}\bar{C}_{21}{}^{\top}\bar{C}_{22}
\end{align*}
with probability one.
The remaining variables in the upper $k\times \ell$ matrix of ${X}_{\rm ctls}$
are computed by the linear equations with \eqref{eq:C21X},
where the corresponding coefficient matrix $m^{-1}[C_{21}{}^{\top}C_{21} \quad  C_{21}{}^{\top}C_{22}]$
converges to $m^{-1}[\bar{C}_{21}{}^{\top}\bar{C}_{21} \quad  \bar{C}_{21}{}^{\top}\bar{C}_{22}]$. 
Therefore, we obtain \eqref{eq:XctlsToX}.
\end{proof}

If we restrict the case $k=0$,
the discussion in this section can be regarded as
an extension of \cite{Aishima2022} in case of $\ell=1$
to the general $\ell \ge 1$.
In other words, the convergence proof of 
Theorem~\ref{thm:main2} covers the case $k=0$ corresponding to 
the exclusion of the condition on $\bar{C}_{21}$. 
Technically, the proof for the general $\ell \ge 1$, restricted to $k=0$, 
is almost the same as in \cite{Aishima2022}.
Importantly, $k$ is generalized.

\section{Conclusion}\label{sec:conclusion}
In this paper, we have constructed two estimators for the regression model as in~\eqref{eq:defbarAXbarB}.
Importantly, both estimators have strong consistency under reasonable assumptions.
One is an estimator $\widehat{X}$ as in~\eqref{eq:hatX}, the strong consistency is theoretically guaranteed in Theorem~\ref{thm:main}. 
The other estimator is the TLS estimator $X_{\rm ctls}$ of~\eqref{eq:ctlscr} in Section~\ref{sec:CTLSrc} with row and column constraints, 
where its strong consistency is proved in Theorem~\ref{thm:main2}.

Both estimators are derived from orthogonal projections associated with the row constraints. The key lemmas are Lemmas~\ref{lem:F} and~\ref{lem:PDP}, which state the strong convergence of the matrices containing the model parameters. It is worth noting that the above key lemmas provide the consistent estimators straightforwardly with the natural proof of strong consistency. 
More specifically, Lemma~\ref{lem:F} implies that the Rayleigh-Ritz procedure
computes the exact eigenpairs in the asymptotic regime.
Lemma~\ref{lem:PDP} states that the orthogonal projection by $\widetilde{P}$
preserves an essential structure of the matrix for constructing the estimator. 
The above procedure for designing orthogonal projection estimators is the main contribution of this paper.

\

\noindent
{\bf Acknowledgment}

\noindent
This study was supported by JSPS KAKENHI Grant Nos. JP17K14143, JP21K11909, and JP22K03422.





\end{document}